\newtheorem{DE}{Definition}[section]
\newcommand {\sm} {\setminus}
\newcommand{\qed}{\relax\ifmmode\hskip2em\Box\else\unskip\nobreak\hfill$\Box$\fi}
\newtheorem{theorem}[DE]{Theorem}
\newtheorem{lemma}[DE]{Lemma}
\newtheorem{conjecture}[DE]{Conjecture}
\newtheorem{corollary}[DE]{Corollary}
{\theoremstyle{break}\theorembodyfont{\rmfamily}}
{\theoremstyle{break}\theorembodyfont{\rmfamily}}
\newcounter{claim}
\newenvironment{proof}[1][]%
	{\noindent {\setcounter{claim}{0}\sc proof --- }{#1}{}}{\qed\vspace{2ex}}
	{\refstepcounter{claim}\vspace{1ex}\noindent {(\it\arabic{claim}) {#1}{}}\it}{\vspace{1ex}}
	{\noindent {}{#1}{}}{ This proves~(\arabic{claim}).\vspace{1ex}}
\begin{document}

\title{Linear balanceable and subcubic balanceable graphs}

\author{Pierre Aboulker\thanks{Universit\'e Paris 7, LIAFA, Case 7014,
    75205 Paris Cedex 13, France.  E-mail:
    aboulker@liafa.jussieu.fr.} ,
Marko Radovanovi\'c\thanks{Faculty of Computer Science (RAF), Union
    University, Knez Mihailova 6/VI, 11000 Belgrade, Serbia. E-mail:
mradovanovic@raf.edu.rs. Partially supported by
  Serbian Ministry
    of Education and Science project 174033.} , Nicolas Trotignon\thanks{CNRS, LIP, ENS
Lyon, INRIA, Universit\'e de Lyon, 15 parvis Ren\'e Descartes BP 7000
69342 Lyon cedex 07 France. E-mail: nicolas.trotignon@ens-lyon.fr.} ,\\Th\'eophile Trunck\thanks{ENS
Lyon, LIP, CNRS, INRIA, Universit\'e de Lyon, 15 parvis Ren\'e Descartes BP 7000
69342 Lyon cedex 07 France. E-mail: theophile.trunck@ens-lyon.fr.}
 ~and Kristina
  Vu\v{s}kovi\'c\thanks{School of Computing, University of Leeds,
    Leeds LS2 9JT, UK and
Faculty of Computer Science (RAF), Union
    University, Knez Mihailova 6/VI, 11000 Belgrade, Serbia. E-mail:
    k.vuskovic@leeds.ac.uk.  Partially supported by
 EPSRC grant EP/H021426/1
and Serbian Ministry
    of Education and Science projects 174033 and III44006.
\newline The first four authors are partially supported by \emph{Agence Nationale de la Recherche} under reference
    \textsc{anr 10 jcjc 0204 01}. All five authors are partially supported by PHC Pavle Savi\'c grant 2010-2011,
jointly awarded by EGIDE,
an agency of the French Minist\`ere des Affaires \'etrang\`eres et
europ\'eennes, and Serbian
Ministry of Education and Science.}}

\date{December 21, 2012}

\maketitle

\begin{abstract}
  In [{Structural properties and decomposition of linear balanced
    matrices}, {\it Mathematical Programming}, 55:129--168, 1992],
  Conforti and Rao conjectured that every balanced bipartite graph
  contains an edge that is not the unique chord of a cycle. We prove
  this conjecture for balanced bipartite graphs that do not contain a
  cycle of length 4 (also known as linear balanced bipartite graphs),
  and for balanced bipartite graphs whose maximum degree is at most
  3. We in fact obtain results for more general classes, namely linear
  balanceable and subcubic balanceable graphs.  Additionally, we prove
  that cubic balanced graphs contain a pair of twins, a result that was
  conjectured by Morris, Spiga and Webb in [Balanced Cayley graphs and
  balanced planar graphs, {\it Discrete Mathematics}, 310:3228--3235,
  2010].
\end{abstract}

\section{Introduction}

A 0,\,1 matrix is {\em balanced} if for every square submatrix with
two ones per row and column, the number of ones is a multiple of
four. This notion was introduced by Berge~\cite{berge-bal}, and later
extended to $0$,\,$\pm1$ matrices by Truemper~\cite{truemper-alpha_bal}.  A 0,\,$\pm1$ matrix is {\em balanced} if
for every square submatrix with two nonzero entries per row and
column, the sum of the entries is a multiple of four.  These matrices
have been studied extensively in literature due to their important
polyhedral properties, for a survey see~\cite{ccv-survey}.

Given a 0,\,1 matrix $A$, the {\em bipartite graph representation of}
$A$ is the bipartite graph having a vertex for every row in $A$, a
vertex for every column of $A$, and an edge $ij$ joining row $i$ to
column $j$ if and only if the entry $a_{ij}$ of $A$ equals 1.  We say
that $G$ is {\em balanced} if it is the bipartite graph representation of some
balanced matrix.  It is easy to see that a bipartite graph $G$ is
balanced if and only if every hole of $G$ has length 0\,(mod\,4),
where a {\em hole} is a chordless cycle of length at least 4.  A {\em
  signed bipartite graph} is a bipartite graph, together with an
assignment of weights $+1$,\,$-1$ to the edges of $G$. A signed
bipartite graph is {\em balanced} if the {\em weight} of every hole
$H$ of $G$, i.e.\ the sum of the weights of the edges of $H$, is
0\,(mod\,4). A bipartite graph is {\em balanceable} if there exists a
{\em signing} of its edges, i.e.\ an assignment of weights $+1$,\,$-1$
to the edges of the graphs, such that the resulting signed bipartite
graph is balanced.

The following conjecture is the last unresolved conjecture about
balanced (balanceable) bipartite graphs in Cornu\'ejols'
book~\cite{cornuejols:packing} (it is Conjecture~6.11). Note that
Conjectures 9.23, 9.28 and 9.29 from~\cite{cornuejols:packing} have
been resolved by Chudnovsky and Seymour
in~\cite{chudnovsky.seymour:cornu}.

\begin{conjecture}[Conforti and Rao~\cite{cr-lin}]\label{cr-conj}
Every balanced bipartite graph contains an edge that is not the unique
chord of a cycle.
\end{conjecture}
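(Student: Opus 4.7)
\bigskip
\noindent\textbf{Proof proposal.} Since Conjecture~\ref{cr-conj} is open in full generality, the plan below is a natural line of attack rather than a completed argument; the present paper verifies it in the linear and subcubic subcases, and I would propose a strategy in the same spirit.

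The main tool available for balanced bipartite graphs is the structural decomposition theorem of Conforti, Cornu\'ejols and Vu\v{s}kovi\'c, which states that every such graph either belongs to an explicit list of \emph{basic} classes (trees, complete bipartite graphs, and a few restricted families) or admits a star cutset, a 2-join, or a 6-join. I would argue by induction on $|V(G)|$: verify the conclusion directly on the basic classes and then show how to lift a witnessing edge through each decomposition. On trees the conclusion is vacuous, and on the remaining basic classes it reduces to inspection of a finite-parameter family.

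The technical core is the lifting. Suppose $G$ admits a 2-join with sides $X_1,X_2$; for each $i$, let $G_i$ be the block obtained from $G[X_i]$ by attaching a \emph{marker} path that replaces the opposite side, whose length is chosen to keep the block balanced and to set up a bijection between holes of $G$ crossing the 2-join and holes of the $G_i$ containing the marker. Induction provides edges $e_i\in E(G_i)$ that are not the unique chord of any cycle in $G_i$, and the goal is to show that at least one of the $e_i$ (chosen to lie in $E(G)$ rather than on the marker) remains a witness in $G$. Star cutsets and 6-joins would be handled analogously, the hope each time being that a parity argument specific to balanced graphs prevents a hole of $G$ crossing the cutset from collapsing its chord structure.

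The main obstacle is precisely this lifting: replacing one side by a marker path may identify two chords or destroy one, so an edge that is safe inside $G_i$ could still become the unique chord of a longer cycle of $G$ that winds through the 2-join. Making the correspondence between cycles-with-unique-chord in $G$ and in the blocks faithful enough is the delicate point, and it is the likely reason this approach has not yet closed the conjecture. In the subcases treated in the present paper the extra hypotheses presumably either forbid many problematic decompositions (the absence of $C_4$ rules out several 2-join configurations) or produce the desired edge directly by extremal arguments at vertices of small degree, bypassing the general lifting step entirely.
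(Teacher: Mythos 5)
The statement you have been given is Conjecture~\ref{cr-conj}, which the paper does not prove and which remains open: the paper establishes it only for linear balanceable graphs (Corollary~\ref{cr-square}) and subcubic balanceable graphs (Corollary~\ref{cr-cubic}). Your proposal is, by your own admission, not a proof, so there is nothing to certify. What can be assessed is the strategy, and here the central point is that the induction-plus-lifting scheme you sketch --- take the decomposition theorem, verify the conclusion on basic classes, and lift an edge that is not the unique chord of a cycle from the blocks of a 2-join, 6-join or star cutset back to $G$ --- is precisely the approach the authors say has never been made to work: they note that the decomposition theorem for balanced bipartite graphs has been known since the early 1990's and still no one knows how to use it to prove the Conforti--Rao conjecture. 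The obstruction you name (a safe edge of a block becoming the unique chord of a long cycle of $G$ winding through the cutset) is the real one, and your proposal offers no mechanism to overcome it.

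The paper's route in the two special cases is genuinely different from edge-lifting, and your final paragraph guesses it essentially correctly. Rather than propagating an edge witness, the authors prove a stronger, vertex-local statement that implies the conjecture and is far more robust under decomposition: in the linear case, that $G$ has at least two vertices of degree at most $2$ (Theorem~\ref{deg2-square}); in the subcubic case, that $G$ has degree-$2$ vertices in non incident branches or pairs of twins (Theorem~\ref{main}). An edge incident to a degree-$2$ vertex cannot be the unique chord of a cycle, and in a cubic graph neither can an edge incident to a vertex having a twin. The second ingredient your sketch lacks is the use of \emph{extreme decompositions}: by taking a minimally-sided $\{2,6\}$-join (Lemmas~\ref{extreme} and~\ref{6jl2}) or a minimally-sided double star cutset (Lemma~\ref{extremeStar}), the authors arrange that one block of decomposition is basic (or at least has no star cutset), so the degree-$2$ vertices can be read off that block directly rather than recursing blindly through both blocks. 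If you want to pursue the general conjecture, the lesson of this paper is that the induction hypothesis must be strengthened to something local of this kind, and the missing piece is a strengthening that survives extended star cutsets and 2-joins with large special sets, which is exactly where the linear and subcubic hypotheses are used.
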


In other words, every balanced bipartite graph contains an edge whose
removal leaves the graph balanced.  This is not true if the graph is
balanceable, as shown by $R_{10}$, that is the graph defined by the
cycle $x_1x_2\ldots x_{10}x_1$ (of length 10) with chords
$x_ix_{i+5}$, $1\leq i\leq 5$ (see Figure \ref{fig:R_10} -- in all
figures in this paper a solid line denotes an edge, and a dashed one a
path of length greater than 1).  Graph $R_{10}$ is cubic and
balanceable (a proper signing of $R_{10}$ is to assign weight $+1$ to
the edges of the cycle $x_1x_2\ldots x_{10}x_1$ and $-1$ to the
chords), but not balanced ($x_1x_2x_3x_4x_5x_6$ is a hole of length
6).  Note that in $R_{10}$ every edge is the unique chord of some
cycle. Conjecture~\ref{cr-conj} generalises to balanceable graphs in
the following way.

\begin{figure}[hbtp]
\begin{center}
\includegraphics{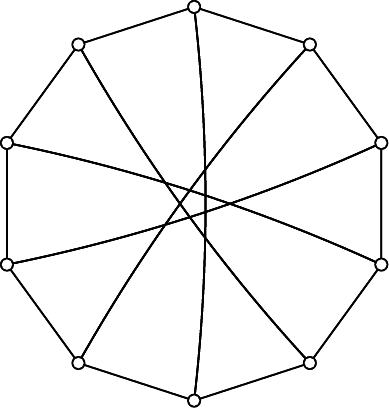}\hspace{2em}
\includegraphics{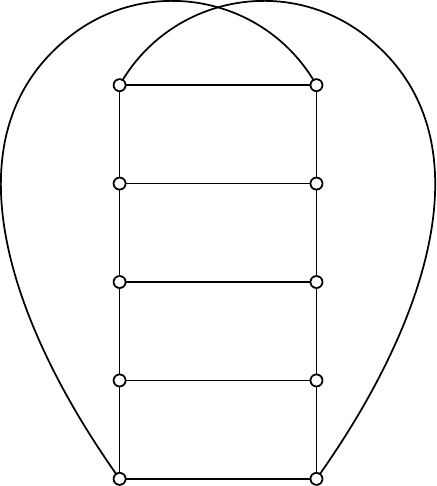}
\end{center}
\caption{Two ways to draw the graph $R_{10}$.\label{fig:R_10}}
\end{figure}

\begin{conjecture}[Conforti, Cornu\'ejols and Vu\v skovi\'c~\cite{ccv-survey}]\label{conj}
  In a balanceable bipartite graph either every edge belongs to some
  $R_{10}$ or there is an edge that is not the unique chord of a
  cycle.
\end{conjecture}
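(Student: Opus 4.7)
\noindent\textbf{Proof proposal for Conjecture~\ref{conj}.}

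The plan is to attack the full conjecture by induction on $|V(G)|+|E(G)|$, combining a local analysis of unique chords with a decomposition theorem for balanceable bipartite graphs. Let $G$ be a minimum counterexample, so every edge of $G$ is the unique chord of some cycle, there is an edge $e^{\ast}\in E(G)$ lying in no $R_{10}$-subgraph of $G$, and $|V(G)|+|E(G)|$ is minimized among all such graphs. Fix a balanced signing $\Sigma$ of $G$.

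First I would check the conclusion directly on the \emph{basic} balanceable graphs, i.e.\ those irreducible under the decompositions used below. The basic balanceable bipartite graphs include $R_{10}$ itself (where every edge trivially lies in an $R_{10}$), complete bipartite graphs, bipartite graphs with a universal vertex on one side, and the ``strongly balanceable'' extensions isolated in the Conforti--Cornu\'ejols--Kapoor--Vu\v{s}kovi\'c program. In each family one either observes that $G$ is covered by copies of $R_{10}$, or exhibits an edge incident to a high-degree vertex whose deletion cannot destroy any hole of length at least six, hence an edge that is not the unique chord of a cycle. This grounds the induction.

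If $G$ is not basic, it admits a cutset from the standard balanceable repertoire (double star cutset, 2-join, or 6-join). Decomposing along it yields balanceable blocks $G_1,G_2$ with $|V(G_i)|+|E(G_i)|<|V(G)|+|E(G)|$; by induction each $G_i$ either contains an edge $f_i$ that is not the unique chord of a cycle in $G_i$, or has every edge inside some $R_{10}\subseteq G_i$. I would then \emph{lift} the good edge to $G$: any new cycle of $G$ using $f_i$ as a chord must traverse the cutset, and the characteristic structure of the cutset (the two internally disjoint paths of a 2-join, the two dominating vertices of a double star cutset, the six boundary vertices of a 6-join) supplies a parallel re-routing whose short-cut becomes a second chord of the lifted cycle, so $f_i$ remains a non-unique chord in $G$. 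When both blocks are $R_{10}$-covered, pasting a copy of $R_{10}$ in $G_1$ to a matching copy in $G_2$ across the cutset produces an $R_{10}$ through $e^{\ast}$, contradicting its definition. In either case the minimality of $G$ is violated.

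The main obstacle is that a decomposition theorem for \emph{balanceable} bipartite graphs with as clean a basic list as Conforti--Cornu\'ejols--Rao's theorem for balanced graphs is not yet available; the graph $R_{10}$ and its variants appear as genuine irreducible pieces, and the cutset inventory must be enlarged to accommodate them. Establishing such a theorem, and verifying that both the ``non-unique chord'' property and the ``covered by $R_{10}$'' property lift through each of its cutsets, is where essentially all of the difficulty lies. The linear ($C_4$-free) and subcubic restrictions treated in this paper are accessible precisely because they suppress most of the pathological cutset configurations and shrink the basic list to a short enumeration; the general conjecture requires facing those pathologies head-on, in particular the interaction between $R_{10}$-style 3-regular subgraphs and the large complete-bipartite subgraphs permitted once $C_4$'s are allowed.
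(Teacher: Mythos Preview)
This statement is Conjecture~\ref{conj}, which the paper does \emph{not} prove; it remains open. The paper establishes only the linear and subcubic special cases (Corollaries~\ref{cr-square} and~\ref{cr-cubic}), and in fact the introduction explicitly remarks that the decomposition theorem for balanced bipartite graphs has been known since the early 1990's and still no one knows how to push the Conforti--Rao conjecture through it. So there is no ``paper's own proof'' to compare your proposal against, and your proposal should be read as a research plan rather than a proof.

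As a plan, it identifies the right architecture but the load-bearing step is not substantiated. Your lifting claim --- that if $f_i$ is not the unique chord of a cycle in a block $G_i$, then any cycle of $G$ having $f_i$ as a chord admits a ``parallel re-routing'' through the cutset that produces a second chord --- is exactly the step that is known to fail in general. For star (and double star) cutsets there is no reason the second path across the cutset should supply a chord rather than simply a longer detour; the paper stresses that the induction hypothesis does \emph{not} go through star cutsets nicely even in the linear case, which is why a bespoke ``minimally-sided double star cutset'' device (Lemma~\ref{extremeStar}) was needed there. Your other lifting claim, that two $R_{10}$-covered blocks can be pasted across a cutset to manufacture an $R_{10}$ through $e^{\ast}$, is also unsupported: $R_{10}$ is $3$-regular with girth~$8$, so it does not survive gluing along a $2$-join with large special sets or along an extended star cutset, and there is no mechanism offered for why the resulting subgraph should again be $R_{10}$ rather than some other balanceable configuration.

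Finally, your treatment of the basic class is too optimistic. The actual basic outcome in Theorem~\ref{cckv-thm} is ``strongly balanceable or $R_{10}$'', and strongly balanceable graphs themselves require Theorems~\ref{cr-thm} and~\ref{yan} to unwind; the families you list (complete bipartite, universal vertex on one side) are not the basic classes here. You are candid in your last paragraph that the needed decomposition theorem and the lifting verifications are where ``essentially all of the difficulty lies'' --- that is accurate, but it also means what you have written is an outline of the obstacles, not a proof.
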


These conjectures are known to be true for several classes of graphs.
A bipartite graph is {\em restricted balanceable} if there exists a
signing of its edges so that in the resulting signed graph every cycle
(induced or not) is balanced. Clearly no edge of a restricted
balanceable bipartite graph can be the unique chord of a cycle. In
other words, the removal of any subset of edges from a restricted
balanceable graph leaves the graph restricted balanceable.  A
bipartite graph is {\em strongly balanceable} if it is balanceable and
does not contain a cycle with a unique chord.  Figure~\ref{fig:not
  strongly bal} shows that there are cubic balanceable graphs that are
not strongly balanceable.  This class generalizes restricted
balanceable graphs, and it clearly satisfies Conjecture~\ref{conj}.  On
the other hand, removing any edge from a strongly balanceable graph
might not leave the graph strongly balanceable. In~\cite{cr-stbal} it
is shown that every strongly balanceable graph has an edge whose
removal leaves the graph strongly balanceable.  A bipartite graph is
{\em totally balanced} if every hole of $G$ is of length 4. It is
shown in~\cite{gg} that every totally balanced bipartite graph has a
bisimplicial edge (i.e.\ an edge $uv$ such that the node set $N(u)
\cup N(v)$ induces a complete bipartite graph). So clearly, the graph
obtained by removing a bisimplicial edge from a totally balanced
bipartite graph is also totally balanced.

\begin{figure}[hbtp]
\begin{center}
\includegraphics{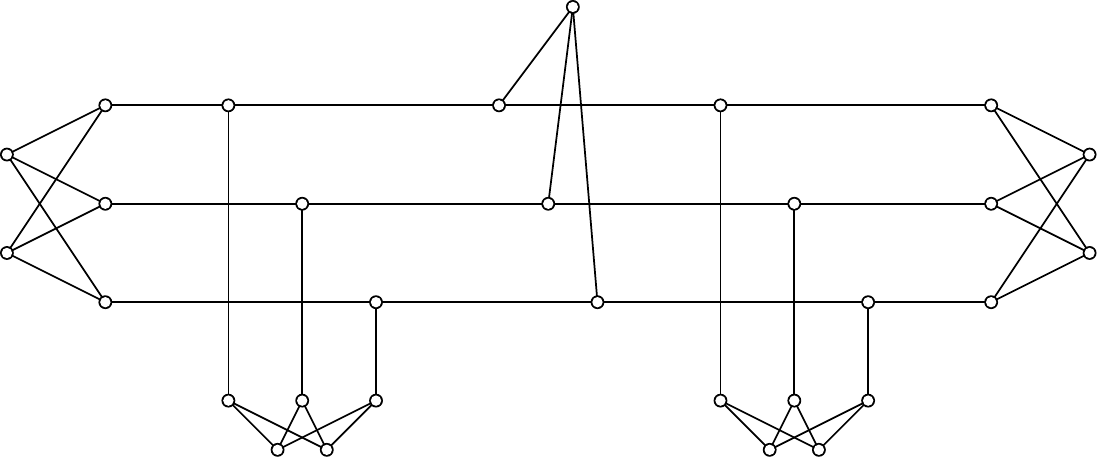}
\end{center}
\caption{Cubic balanceable graph that is not strongly balanceable.\label{fig:not strongly bal}}
\end{figure}

A bipartite graph is {\em linear balanceable} if it is balanceable and
does not contain a 4-hole (i.e.\ a hole of length 4). A graph $G$ is
{\em subcubic} if $\Delta(G) \le 3$.  In this paper, we prove
that conjectures~\ref{cr-conj} and~\ref{conj} hold when restricted to
linear balanceable graphs (see Corollary~\ref{cr-square}) and to
subcubic balanceable graphs (see Corollary~\ref{cr-cubic}).  For the subcubic
case, our proof relies on a result conjectured by~Morris, Spiga and
Webb~\cite{spiga}, stating that every cubic balanced bipartite graph
contains a pair of vertices with the same neighborhood (see
Corollary~\ref{twins}).

Our proofs are based on known decomposition theorems for the classes
we consider, which we describe in Section~\ref{decomposition}. The
decomposition theorems say that either the graph belongs to some
simple subclass, that we call basic, or it has a 2-join, 6-join or
star cutset.  It is not straightforward to use these decomposition
theorems to prove the desired results.  In fact, the decomposition
theorem for balanced bipartite graphs~\cite{ccr-bal} has been known
since the early 1990's, and still no one knows how to use it to prove
the Conforti and Rao Conjecture.  The key idea that makes things work
for us, is the use of extreme decompositions, i.e.\ decompositions in
which one of the blocks is basic. In Section~\ref{secnostar} we prove
that if star cutsets are excluded, then the graphs in our classes
admit extreme decompositions. This is sufficient for the proof of the
main result in the subcubic case in Section~\ref{cubic}, since the
induction hypothesis in this case goes through the star cutset
nicely. For the linear balanceable bipartite graphs, this is not the
case. Here we cannot inductively get rid of star cutsets in a
straightforward manner.  Furthermore, it is not true that if a (linear
balanceable) graph has a star cutset, then it has a star cutset one of
whose blocks of decomposition does not have a star cutset. Instead, to
prove the main result for linear balanceable graphs in
Section~\ref{squarefree}, we develop a new technique for finding an
``extreme decomposition'' with respect to star cutsets: we look for a
minimally-sided double star cutset, and show that the corresponding
block of decomposition does not have a star cutset.

\subsection*{Terminology}

We say that a graph $G$ {\em contains} a graph $H$ if $H$ is
isomorphic to an induced subgraph of $G$. A graph $G$ is {\em
  $H$-free} if it does not contain $H$. For $x\in V(G)$, $N(x)$
denotes the set of neighbors of~$x$.  For $S \subseteq V(G)$, $G[S]$
denotes the subgraph of $G$ induced by $S$, and $G\setminus
S=G[V(G)\setminus S]$.  For $S\subseteq E(G)$, $G\setminus S$ denotes
the graph obtained from $G$ by deleting edges from $S$.

A {\em path} $P$ is a sequence of distinct vertices $p_1p_2\ldots
p_k$, $k\geq 1$, such that $p_ip_{i+1}$ is an edge for all $1\leq i
<k$.  Edges $p_ip_{i+1}$, for $1\leq i <k$, are called the {\em edges
  of $P$}.  Vertices $p_1$ and $p_k$ are the {\em ends} of $P$.  A
cycle $C$ is a sequence of vertices $p_1p_2\ldots p_kp_1$, $k \geq 3$,
such that $p_1\ldots p_k$ is a path and $p_1p_k$ is an edge.  Edges
$p_ip_{i+1}$, for $1\leq i <k$, and edge $p_1p_k$ are called the {\em
  edges of $C$}.  Let $Q$ be a path or a cycle.  The vertex set of $Q$
is denoted by $V(Q)$.  The {\em length} of $Q$ is the number of its
edges.  An edge $e=uv$ is a {\em chord} of $Q$ if $u,v\in V(Q)$, but
$uv$ is not an edge of $Q$. A path or a cycle $Q$ in a graph $G$ is
{\em chordless} if no edge of $G$ is a chord of $Q$.  The {\em girth} of a
graph is the length of its shortest cycle.

A {\em cut vertex} of a connected graph $G$ is a vertex $v$ such that
$G\setminus\{v\}$ is disconnected. A {\em block} of a graph is a connected
subgraph that has no cut vertex and that is maximal with respect to
this property.  We may associate with any graph $G$ a graph $B(G)$ on
${\mathcal B} \cup S$, where $\mathcal B$ is the set of blocks of $G$
and $S$ the set of cut vertices of $G$, a block $B$ and a cut vertex
$v$ being adjacent if and only if $B$ contains $v$.  It is a classical
result that $B(G)$ is a tree (see~\cite{bondy.murty:book}). The blocks
that correspond to leaves of $B(G)$ are the {\em end blocks} of $G$.

\section{Decomposition theorems}\label{decomposition}

In this section we describe known decomposition theorems for
balanceable graphs.  First, we state the forbidden induced subgraph characterization of balanceable graphs.  Let $G$ be a bipartite graph. Let $u$, $v$ be two nonadjacent
vertices of $G$. A {\em 3-path configuration connecting $u$ and $v$},
is defined by three chordless paths $P_1$, $P_2$, $P_3$ with ends $u$
and $v$, such that the vertex set $V(P_i)\cup V(P_j)$ induces a hole,
for $i,j\in\{1,2,3\}$ and $i\neq j$. A 3-path configuration is said to
be {\em odd} if it connects two vertices that are on opposite sides of
the bipartition.  A {\em wheel} is defined by a hole $H$ and a vertex
$x\not\in V(H)$ having at least three neighbors in $H$, say
$x_1,x_2,\ldots,x_n$. If $n$ is even, then the wheel is an {\em even
  wheel}, and otherwise it is an {\em odd wheel}. A 3-path
configuration and an odd wheel are shown in Figure
\ref{fig:3pc+oddwheel}.

\begin{figure}[hbtp]
\begin{center}
\includegraphics{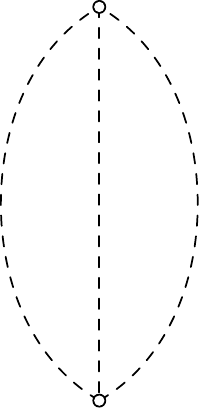}\qquad\qquad
\includegraphics{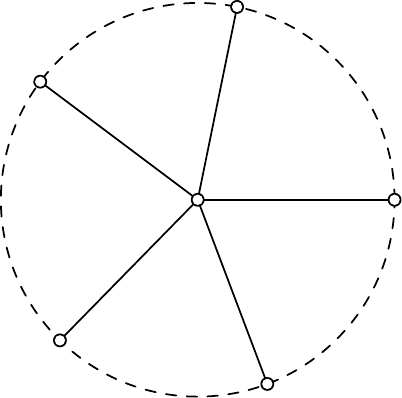}
\end{center}
\caption{3-path configuration and an odd wheel.\label{fig:3pc+oddwheel}}
\end{figure}

It is easy to see that a balanceable graph does not contain an odd
3-path configuration, nor an odd wheel. The following theorem of
Truemper states that the converse is also true.

\begin{theorem}[Truemper~\cite{truemper-alpha_bal}]\label{balanceable-truemper}
  A bipartite graph is balanceable if and only if it does not contain
  an odd wheel nor an odd 3-path configuration.
\end{theorem}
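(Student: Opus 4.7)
The plan is to treat the two directions separately. The forward (``only if'') direction is a routine modulo-$4$ calculation on any balanced signing $w\colon E(G)\to\{\pm 1\}$. For an odd $3$-path configuration with paths $P_1,P_2,P_3$ between $u$ and $v$ on opposite sides of the bipartition, each $P_i$ has odd length and so $w(P_i)$ is odd, yet the three pairwise hole conditions $w(P_i)+w(P_j)\equiv 0\pmod 4$ combine to force every $w(P_i)$ to be even, a contradiction. For an odd wheel with hub $x$ and $n$ neighbors $x_1,\dots,x_n$ ($n$ odd) on the rim $H$ delimiting sectors $H_i$ (from $x_i$ to $x_{i+1}$), each small cycle $H_i\cup\{xx_i,xx_{i+1}\}$ is a hole, yielding $w(H_i)+w(xx_i)+w(xx_{i+1})\equiv 0\pmod 4$; summing over $i$ and subtracting $w(H)=\sum_i w(H_i)\equiv 0\pmod 4$ gives $2\sum_i w(xx_i)\equiv 0\pmod 4$, while $\sum_i w(xx_i)$ is a sum of $n$ odd terms, hence odd.

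For the reverse (``if'') direction, which is the substantial content (originally due to Truemper), I would recast balanceability as a linear system over $\mathbb{F}_2$ via the substitution $y_e=(1-w(e))/2\in\{0,1\}$, so that the task becomes the solvability of
\[
\sum_{e\in H}y_e \equiv \frac{|H|}{2} \pmod 2 \qquad \text{for every hole } H \text{ of } G.
\]
I would first reduce to $G$ being $2$-connected (cut vertices split the signing problem blockwise and the block signings combine freely at any cut vertex), and then induct on $|E(G)|$ via an open ear decomposition $G=C_0\cup P_1\cup\cdots\cup P_k$. Sign $C_0$ to satisfy its own hole constraint (possible since $|C_0|$ is even), and at step $i$ extend the signing to $P_i$ so that one designated hole through $P_i$ is satisfied, leaving one global parity of $w(P_i)$ free. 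The crux is then to show that this free parity can be chosen consistently with every other hole through $P_i$.

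The main obstacle is exactly this consistency verification. Two holes $H,H'$ through $P_i$ imposing opposite parities on $w(P_i)$ produce a symmetric difference $H\triangle H'$ that is an even closed subgraph of $C_0\cup\cdots\cup P_{i-1}$, but not necessarily chordless, so no forbidden configuration is immediately exhibited. The delicate step, and the technical heart of Truemper's argument, is to extract a chordless certificate from this discrepancy: either an odd $3$-path configuration (three induced paths sharing two common endpoints, with $P_i$ playing the role of one of them), or an odd wheel (a common vertex with enough neighbors to localise the parity mismatch on a single induced cycle). My plan would be to choose a disagreeing pair $(H,H')$ minimising $|H\triangle H'|$ and iteratively shorten subpaths along available chords while tracking $\mathbb{F}_2$-parity, until the minimal disagreeing pair itself reveals one of the two forbidden structures. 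This path-shortening step is where I expect the real difficulty to lie.
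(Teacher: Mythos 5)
The paper does not prove this statement at all: it is imported verbatim from Truemper's 1982 paper, so there is no in-paper argument to measure yours against. Your ``only if'' direction is correct and complete as written: for an odd 3-path configuration the three pairwise hole constraints force each $w(P_i)$ to be even while odd path length forces each $w(P_i)$ to be odd, and for an odd wheel the sector summation gives $w(H)+2\sum_i w(xx_i)\equiv 0\pmod 4$, hence $\sum_i w(xx_i)$ even, against a sum of an odd number of $\pm 1$ terms. (One should just record that each sector plus the hub is indeed a hole, which follows since $H$ is chordless and the hub has no neighbor in the interior of a sector.)

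The ``if'' direction, however, contains a genuine gap, and you have located it yourself: the ``consistency verification'' you defer \emph{is} Truemper's theorem. The $\mathbb{F}_2$ reformulation, the reduction to $2$-connected graphs, the ear decomposition, and the signing of one designated hole per ear are all routine and broadly match how the known proofs are organized; but the claim that a disagreeing pair of holes $H,H'$ through the new ear can be ``shortened along chords'' until an odd wheel or odd 3-path configuration appears is the entire content of the result, and your sketch supplies no invariant ensuring that the shortening terminates in one of those two configurations. Note that the symmetric difference $H\triangle H'$ is an even subgraph and hence a union of cycles, but those cycles need not be chordless, and a balanced signing constrains only holes, not arbitrary cycles --- this is exactly the distinction the paper draws between \emph{balanceable} and \emph{restricted balanceable} graphs, so no cycle-space argument over $\mathbb{F}_2$ can close the gap on its own. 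As it stands, your proposal proves the easy direction and restates the hard direction as an open subproblem rather than resolving it; to complete it you would need to carry out the minimal-counterexample/path-shortening analysis (or cite Truemper, as the paper does).
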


Now, we introduce different cutsets used in the decomposition theorems
that we need.


A  set $S$ of vertices (resp.\ edges) of a connected graph  $G$ is a {\em vertex cutset} (resp.\ {\em edge cutset}) if
the subgraph $G \setminus S$ is disconnected.

\vspace{2mm}

\noindent{\bf 1-join}
\\
A graph $G$ has a {\em 1-join} if $V(G)$ can be partitioned into sets $X$ and $Y$ so that the following hold:
\begin{itemize}

\item $|X|\geq 2$ and $|Y|\geq 2$.

\item There exist sets $A$ and $B$ such that
$\emptyset\neq A\subseteq X$ and $\emptyset\neq B\subseteq Y$;
there are all possible edges between $A$ and $B$; and
there are no other edges between $X$ and $Y$.

\end{itemize}
We say that $(X,Y,A,B)$ is a {\em split} of this 1-join.

\vspace{2ex}

\noindent{\bf 2-join}
\\
A graph $G$ has a {\em 2-join} $(X_1,X_2)$ if $V(G)$ can be partitioned into sets $X_1$ and $X_2$ so
that the following hold:
\begin{itemize}
\item For $i=1,2$, $X_i$ contains disjoint nonempty sets $A_i$ and $B_i$, such that
every vertex of $A_1$ is adjacent to every vertex of $A_2$,
every vertex of $B_1$ is adjacent to every vertex of $B_2$, and there are no other adjacencies
between $X_1$ and $X_2$.
\item For $i=1,2$, $X_i$ contains at least one path from $A_i$ to $B_i$, and if
$|A_i|=|B_i|=1$, then $G[X_i]$ is not a chordless path.
\end{itemize}

We say that $(X_1,X_2,A_1,A_2,B_1,B_2)$ is a {\em split} of this 2-join, and the sets $A_1,A_2,B_1,B_2$ are the {\em special sets} of this 2-join.

\vspace{2ex}

\noindent{\bf 6-join}
\\
A graph $G$ has a {\em 6-join} $(X_1,X_2)$ if $V(G)$ can be partitioned into sets $X_1$ and $X_2$ so that the following hold:
\begin{itemize}

\item $X_1$ (resp.\ $X_2$) contains disjoint nonempty sets $A_1, A_3,
  A_5$ (resp.\ $A_2, A_4, A_6$) such that, for every $i \in \{ 1,
  \dots ,6\}$, every vertex in $A_i$ is adjacent to every vertex in
  $A_{i-1}\cup A_{i+1}$ (where subscripts are taken modulo 6), and
  these are the only adjacencies between $X_1$ and $X_2$.

\item $|X_1|\geq 4$ and $|X_2|\geq 4$.

\end{itemize}

We say that $(X_1,X_2,A_1,A_2,A_3,A_4,A_5,A_6)$ is a {\em split} of this 6-join.

\vspace{2ex}

\noindent{\bf Extended star cutset}
\\
In a connected bipartite graph $G$, $(x,T,A,R)$ is an {\em extended
  star cutset} if $T$, $A$, $R$ are disjoint subsets of $V(G)$, $x\in
T$ and the following hold:
\begin{itemize}

\item The graph $G \setminus (T\cup A \cup R)$ is disconnected.

\item $A\cup R\subseteq N(x)$

\item The vertex set $T\cup A$ induces a complete bipartite graph (with vertex set $T$ on one side of the bipartition and vertex set $A$ on the other).

\item If $|T|\geq 2$, then $|A|\geq 2$.

\end{itemize}

An extended star cutset such that $T=\{x\}$ is a {\em star cutset}. In
this paper we will denote it as $(x,R)$.  Note that when $|T|=1$ and
$A\cup R=\emptyset$ then $\{ x \}$ is a cut vertex.

\medskip

The following theorem is proved in~\cite{cckv-bal}, building on the
decomposition theorem in~\cite{ccr-bal}. We observe that the
definition of 2-join in~\cite{ccr-bal} and~\cite{cckv-bal} is slightly
different from the one we gave here. We define the 2-join and state
the following theorem as in~\cite{ccv-survey}.  The statement is
easily seen to be equivalent to the one in~\cite{cckv-bal} by Lemma
\ref{l1} below.

\begin{theorem}[Conforti, Cornu\'ejols, Kapoor and Vu\v{s}kovi\'c~\cite{cckv-bal}]\label{cckv-thm}
  A connected balanceable bipartite graph is either strongly
  balanceable or is $R_{10}$, or it has a 2-join, a 6-join or an
  extended star cutset.
\end{theorem}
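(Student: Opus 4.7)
The plan is essentially to invoke the cited result. The theorem statement from~\cite{cckv-bal} establishes exactly the same trichotomy, but with a slightly different definition of 2-join. First I would recall the original statement precisely: the same outcomes (strongly balanceable, $R_{10}$, 2-join, 6-join, or extended star cutset), but using the 2-join notion of~\cite{cckv-bal}. Since the definitions of strong balanceability, of $R_{10}$, of 6-join, and of extended star cutset coincide in the two papers, no translation is needed for those four alternatives, and I may quote the result of~\cite{cckv-bal} verbatim on a connected balanceable bipartite graph $G$.

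The second step is to reconcile the two notions of 2-join. This is exactly what Lemma~\ref{l1} is designed for: it is meant to show that the present definition of 2-join and the one of~\cite{cckv-bal} are equivalent on connected bipartite graphs, modulo the possibility that a ``2-join in the sense of~\cite{cckv-bal}'' that fails our extra requirements (nonemptiness of an $A_i$--$B_i$ path in each $X_i$, and $G[X_i]$ not being a chordless path when $|A_i|=|B_i|=1$) can be replaced, in our setting, by one of the other cutsets already listed in the dichotomy. Given Lemma~\ref{l1}, the deduction is mechanical: one applies the theorem of~\cite{cckv-bal} to $G$, and translates the 2-join, if one is produced, into a 2-join in our sense, or else into an extended star cutset or a 6-join.

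The only potentially delicate point is the case analysis that Lemma~\ref{l1} must carry out for the degenerate splits explicitly excluded by our definition. Concretely, if some $G[X_i]$ is a chordless path $P = a v_1 \dots v_k b$ with $A_i = \{a\}$ and $B_i = \{b\}$, then the internal vertex $v_1$ (for instance) is adjacent only to $a$ and $v_2$; since all vertices of $A_j$ on the other side are attached to $a$, it is natural to read off an extended star cutset centred at $a$ that separates $v_1,\dots,v_k,b$ from the rest of $G$. A similar inspection covers the case where some $X_i$ contains no $A_i$--$B_i$ path. The \emph{hard part}, in other words, is only to check that in each such degenerate configuration one of the remaining alternatives (strongly balanceable, $R_{10}$, 6-join, extended star cutset, or a non-degenerate 2-join) is forced; this is precisely the content of Lemma~\ref{l1}, and, as noted by the authors, it is ``easily seen.'' Once the lemma is established, Theorem~\ref{cckv-thm} follows immediately from~\cite{cckv-bal}.
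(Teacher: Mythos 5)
Theorem~\ref{cckv-thm} is an imported result: the paper offers no proof beyond citing~\cite{cckv-bal} and remarking that the statement given here (following the survey) is equivalent to the original via Lemma~\ref{l1}. At that level your plan coincides exactly with the paper's. However, the one concrete step you sketch is wrong, and it reflects a misreading of how Lemma~\ref{l1} is used. You propose that a degenerate split in which $G[X_i]$ is a chordless path $a v_1 \dots v_k b$ with $A_i=\{a\}$, $B_i=\{b\}$ yields an extended star cutset centred at $a$ separating $v_1,\dots,v_k,b$ from the rest of $G$; it does not, because after deleting $a$ together with its neighbours the vertex $b$ is still complete to $B_{3-i}\subseteq X_{3-i}$, so $v_2,\dots,v_k,b$ remain attached to the other side and in general nothing is separated. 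Lemma~\ref{l1} is not a device for converting degenerate 2-joins into other cutsets; it is a statement that in a graph with \emph{no star cutset} every 2-join is automatically non-degenerate (item~(i) supplies the required $A_i$--$B_i$ path in each side, items~(iii)--(v) control the special sets and $|X_i|$). So the correct order of the argument is the reverse of yours: first dispose of the extended-star-cutset outcome, so that $G$ may be assumed star-cutset-free, and only then invoke Lemma~\ref{l1} to upgrade the 2-join produced by the theorem of~\cite{cckv-bal} to one satisfying the present definition.
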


\begin{theorem}[Conforti and Rao~\cite{cr-stbal}]\label{cr-thm}
  A strongly balanceable bipartite graph is either restricted
  balanceable or has a 1-join.
\end{theorem}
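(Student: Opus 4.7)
The plan is to argue the contrapositive: assume $G$ is strongly balanceable and has no 1-join, and show that $G$ is restricted balanceable. Fix a balanceable signing $\sigma$ of $G$, which exists since $G$ is balanceable (Theorem~\ref{balanceable-truemper}). In a bipartite graph every cycle has even length and, being a sum of $\pm 1$ weights, has even weight, so the only possible cycle weights modulo $4$ are $0$ and $2$. It thus suffices to rule out \emph{bad} cycles, i.e.\ cycles of weight $\equiv 2 \pmod 4$.

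Suppose for contradiction that bad cycles exist, and let $C$ be a shortest one. Holes are good by the choice of $\sigma$, so $C$ has a chord; since $G$ is strongly balanceable, $C$ must in fact have at least two chords, for otherwise $C$ itself would be a cycle with a unique chord. For any chord $e$ of $C$ splitting $C$ into sub-cycles $C', C''$, the identity
\[
w(C) = w(C') + w(C'') - 2\sigma(e)
\]
combined with the minimality of $C$ (which forces $w(C')\equiv w(C'')\equiv 0\pmod 4$) gives $w(C)\equiv -2\sigma(e)\equiv 2\pmod 4$ consistently, so a single chord yields no contradiction. The leverage comes from iterating this identity over \emph{every} chord of $C$, and moreover over every sub-cycle of $C$ obtained from a chord or pair of chords together with arcs of $C$: each such sub-cycle has at least one chord (by the identity and the minimality of $C$) and hence at least two chords of $G$ (by strong balanceability). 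This propagates a rigid, dense chord pattern along~$C$.

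I would then exploit this rigidity to construct a 1-join. The expected picture is that $V(C)$ splits into two arcs $X_C,Y_C$ such that nonempty $A\subseteq X_C$ and $B\subseteq Y_C$ form a complete bipartite subgraph through the chords of $C$, with no other edges between $X_C$ and~$Y_C$. One then extends $X_C,Y_C$ to a partition $X,Y$ of $V(G)$ by distributing the remaining vertices according to their connected components in $G\setminus(A\cup B)$. The ``no cycle of $G$ has a unique chord'' hypothesis, applied to cycles obtained by combining $V(C)$ with an attachment path from $V(G)\setminus V(C)$, is used to rule out any edge crossing the split outside $A\times B$, since such an edge would typically be the unique chord of the cycle it completes. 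The resulting $(X,Y,A,B)$ is then a 1-join, contradicting the hypothesis.

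The main obstacle, and the most technical step, is precisely this global extension: promoting the local chord profile around a single minimum bad cycle into a bona fide bipartition of the entire graph, while simultaneously verifying $|A|,|B|\ge 1$, $|X|,|Y|\ge 2$, the complete-bipartite condition on $A\times B$, and the absence of other edges between $X$ and $Y$. Numerous degenerate configurations must be handled carefully --- short $C$, chords sharing endpoints on $C$, ``nested'' versus ``crossing'' chord pairs along $C$, and attachments of $G\setminus V(C)$ to $V(C)$ via paths of various lengths --- and I expect a two-level extremality argument, minimizing $C$ first in length and then in number of chords, to be what keeps the case analysis tractable.
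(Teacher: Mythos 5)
First, a point of reference: the paper does not prove Theorem~\ref{cr-thm} at all --- it is imported verbatim from Conforti and Rao~\cite{cr-stbal} and used as a black box --- so there is no in-paper proof to compare yours against, and your attempt has to be judged on its own. Your frame (fix a balanced signing $\sigma$, argue the contrapositive, take a shortest cycle $C$ of weight $\equiv 2\pmod 4$, note that $C$ is not a hole and hence, by strong balanceability, has at least two chords, and use the identity $w(C)=w(C')+w(C'')-2\sigma(e)$) is sensible, and those preliminary steps are correct. But the first real deduction you draw from them is false as stated: you claim that ``each such sub-cycle has at least one chord (by the identity and the minimality of $C$).'' Minimality only tells you that each proper sub-cycle is \emph{good}, i.e.\ has weight $\equiv 0\pmod 4$, and good cycles are perfectly entitled to be holes. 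What you actually get from strong balanceability is the weaker dichotomy that each sub-cycle has zero or at least two chords, so the ``rigid, dense chord pattern'' you want to propagate along $C$ does not follow from anything you wrote. The place where minimality genuinely bites, and which your sketch never uses, is a \emph{pair of crossing chords} $e=pr$, $f=qs$ cutting $C$ into arcs $Q_1,\dots,Q_4$: the two cycles $D$ (on $Q_1,Q_3,e,f$) and $D'$ (on $Q_2,Q_4,e,f$) satisfy $w(D)+w(D')=w(C)+2\sigma(e)+2\sigma(f)\equiv 2\pmod 4$, so one of them is bad, and minimality of $C$ then forces two opposite arcs to be single edges. That degeneracy, not the single-chord identity, is what begins to produce the complete bipartite core $A\times B$ of a 1-join.

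The second and larger problem is that everything after this point --- the claim that the chords of $C$ organize themselves into a complete bipartite pattern between two arcs, the extension of that local picture to a partition $(X,Y)$ of all of $V(G)$, and the verification that no edge other than those in $A\times B$ crosses the partition --- is presented only as an ``expected picture'' that you explicitly defer, together with a list of degenerate cases you acknowledge not having handled (chords sharing endpoints, nested chords, short arcs, attachments of $G\setminus V(C)$). That deferred step \emph{is} the theorem; the part you have written out is the routine part. As it stands this is a plan for a proof, with one incorrect intermediate claim, rather than a proof.
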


A bipartite graph is {\em basic} if it admits a bipartition such that all the vertices in one side of the bipartition have degree
at most 2.

\begin{theorem}[Yannakakis~\cite{yannakakis}]\label{yan}
  A restricted balanceable bipartite graph is either basic or has a
  cut vertex or a 2-join whose special sets are all of size 1.
\end{theorem}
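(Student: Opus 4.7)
\medskip
\noindent\textbf{Proof proposal.}
The plan is to exploit the very rigid structural constraints imposed by restricted balanceability. As a preliminary, I would first establish that \emph{every cycle of $G$ is chordless}. Indeed, if a cycle $C$ has a chord $e$, then $e$ splits $C$ into two subcycles $C_1,C_2$, and a direct weight count gives $w(C_1)+w(C_2)=w(C)+2\sigma(e)$. Since $w(C)$, $w(C_1)$ and $w(C_2)$ are all $\equiv 0\pmod 4$, we would be forced into $2\sigma(e)\equiv 0\pmod 4$, impossible for $\sigma(e)\in\{+1,-1\}$. Hence no edge of $G$ is a chord of any cycle; equivalently, $G$ contains no theta-subgraph in which one of the three paths is a single edge.

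I would then proceed by induction on $|V(G)|$. Assume $G$ is restricted balanceable, not basic, and has no cut vertex; the goal is to exhibit a $2$-join $(X_1,X_2,\{a_1\},\{a_2\},\{b_1\},\{b_2\})$ with all special sets of size $1$. Since $G$ is not basic, both sides $A,B$ of the bipartition contain a vertex of degree at least $3$; fix such a $u\in A$ with three neighbors $v_1,v_2,v_3\in B$. Because $G$ has no cut vertex, there exist chordless paths $P_{ij}$ joining $v_i$ to $v_j$ in $G\setminus u$. Chord-freeness of cycles forbids $u$ from being adjacent to any internal vertex of $P_{ij}$ and forbids $v_k$ (for $k\notin\{i,j\}$) from lying on $P_{ij}$, so the paths $P_{ij}$ form a very rigid configuration.

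I would then extract a second, quantitative constraint from the signing. For any theta-subgraph with internally disjoint paths of lengths $l_1,l_2,l_3$, summing the three cycle-weight congruences (number of negative edges in each cycle has the parity of half the cycle's length) forces $l_1+l_2+l_3$ to be even. Combined with chord-freeness, this rigidly controls how $P_{12},P_{13},P_{23}$ can intersect near $u$. A careful case analysis of those intersections should expose a pair of non-adjacent vertices $a_1,b_1\in X_1$ each with a single neighbor $a_2,b_2\in X_2$ across the cut, yielding the required $2$-join; in the degenerate situations where no such attachment pair exists one argues that $G$ must in fact have been basic.

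The main obstacle is this last step: even with chord-freeness and the theta-parity in hand, pinpointing a $2$-join whose \emph{all four} special sets have size exactly $1$ (rather than a weaker $2$-vertex cutset or a $2$-join with larger special sets) requires an explicit construction and a nontrivial case distinction. A secondary subtlety is verifying that the $2$-join produced is not of the forbidden form in which one of the blocks $G[X_i]$ is a chordless path, which I would handle by a direct analysis of the end-blocks of the tree of chordless paths around $u$.
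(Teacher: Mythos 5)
First, a point of reference: the paper does not prove this statement at all. Theorem~\ref{yan} is quoted from Yannakakis~\cite{yannakakis} as a black-box ingredient of the decomposition machinery, so there is no in-paper argument to compare you against; your proposal has to stand on its own as a proof of Yannakakis's theorem, and as written it does not.

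Your two preliminary observations are correct and genuinely relevant. The identity $w(C_1)+w(C_2)=w(C)+2\sigma(e)$ does show that in a restricted balanceable signed graph no cycle has a chord, and the parity argument for theta subgraphs is sound (in fact you can extract more: the three paths of a theta join the same pair of vertices of a bipartite graph, hence all have the same parity, and combined with $l_1+l_2+l_3$ even this forces each $l_i$ to be even, so the two branch vertices of every theta lie on the same side of the bipartition). The genuine gap is exactly where you flag it: everything after ``a careful case analysis of those intersections should expose a pair of non-adjacent vertices $a_1,b_1$\dots'' is the entire content of the theorem and is not carried out. A 2-join with all special sets of size~1 is a \emph{global} object --- a 2-edge-cut $\{a_1a_2,b_1b_2\}$ with $a_1\neq b_1$, $a_2\neq b_2$, each side containing an $a_i$--$b_i$ path and neither side inducing a chordless path --- and it is not clear that inspecting the local configuration of three paths around one degree-3 vertex $u$ can ever certify such a separation; moreover the induction on $|V(G)|$ that you announce is never actually invoked, and the alternative outcome ``$G$ is basic'' is a statement about every vertex on one side of the bipartition, which again cannot be read off the local picture at $u$. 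So the proposal is a plausible opening plus a restatement of the difficulty, not a proof; to close it you would need either to reproduce Yannakakis's actual argument or to supply a new global mechanism (for instance a minimally-sided separation argument in the spirit of Section~\ref{secnostar}) that converts the chordless-cycle and parity constraints into the required cut.
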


The following lemma is proved in~\cite{nicolas.kristina:two} (Lemma
3.2) and a special case of it is proved in~\cite{ccr-bal} (Lemma 2.4).

\begin{lemma}\label{l1}
Let $G$ be a graph that has no star cutset, and let
$(X_1,X_2,A_1,A_2,B_1,B_2)$ be a split of a 2-join of $G$.
Then for $i=1,2$, the following hold:
\begin{enumerate}[(i)]
\item Every component of $G[X_i]$ meets both $A_i$ and $B_i$.
\item Every $u\in X_i$ has a neighbor in $X_i$.
\item Every vertex of $A_i$ has a non-neighbor in $B_i$.
\item Every vertex of $B_i$ has a non-neighbor in $A_i$.
\item $|X_i|\geq 4$.
\end{enumerate}
\end{lemma}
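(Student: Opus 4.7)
The plan is to prove (i)--(v) by contrapositive: in each case, assume the statement fails and exhibit an explicit star cutset of $G$, contradicting the hypothesis (I will implicitly assume $G$ is connected, as is standard when forbidding star cutsets). I would handle (i) first, deduce (ii) from it, then prove (iii) (with (iv) by symmetry), and finally derive (v) from (ii)--(iv) together with the non-chordless-path clause in the definition of a 2-join.

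For (i), let $C$ be a component of $G[X_1]$ with $C\cap A_1=\emptyset$. If also $C\cap B_1=\emptyset$, then $C$ has no edges to $X_1\setminus C$ and none to $X_2$ (since only $A_1\cup B_1$ sees $X_2$), so $C$ would be a union of connected components of $G$, contradicting connectedness together with $A_1\neq\emptyset$. Otherwise the 2-join-guaranteed $A_1$-$B_1$ path in $G[X_1]$ lies in a component $C'\neq C$, producing a vertex $x\in B_1\setminus C$. I would then take $S=\{x\}\cup N(x)$. Because $x\in B_1$, we have $B_2\subseteq N(x)$; because $x\notin C$, the set $N(x)\cap X_1$ lies in the component of $x$ in $G[X_1]$ and is therefore disjoint from $C$. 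Thus $S\cap C=\emptyset$, yet $S$ contains every neighbor of $C$ outside $C$, so $C$ is isolated in $G\setminus S$ from $A_2\subseteq X_2\setminus S$, giving the required star-cutset contradiction.

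Item (ii) follows immediately: a vertex of $X_i$ with no $X_i$-neighbor would be a singleton component of $G[X_i]$, violating (i). For (iii), suppose some $a\in A_1$ is adjacent to every vertex of $B_1$. Then $\{a\}\cup A_2\cup B_1\subseteq\{a\}\cup N(a)$ is a star cutset separating $X_1\setminus(\{a\}\cup B_1)$ from $B_2$, unless $X_1=\{a\}\cup B_1$. In that exceptional case $A_1=\{a\}$; the subcase $|B_1|=1$ makes $G[X_1]$ a chordless edge, contradicting the 2-join definition, while for $|B_1|\geq 2$ I would pick any $b\in B_1$ and use $\{b\}\cup N(b)=\{a,b\}\cup B_2$, which isolates the nonempty independent set $B_1\setminus\{b\}$ from $A_2$. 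Part (iv) is symmetric.

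For (v), combine (ii)--(iv). The delicate case is $|A_i|=|B_i|=1$: writing $A_i=\{a\}$, $B_i=\{b\}$, (iii) forces $ab$ to be a non-edge and (ii) gives each of $a,b$ a neighbor in $X_i\setminus\{a,b\}$; if $|X_i|=3$ those neighbors must coincide at a common $c$, making $G[X_i]$ the chordless path $a$--$c$--$b$ and contradicting the 2-join definition once more. The mixed cases $(|A_i|,|B_i|)\in\{(1,\geq 2),(\geq 2,1)\}$ also force an extra vertex by combining (ii) with (iii) or (iv); the remaining case is immediate. The main obstacle I expect is identifying the correct cutset center in (i)---the key insight being that one must pick a $B_1$-vertex \emph{outside} $C$ so that the star simultaneously absorbs all of $B_2$ while keeping $C$ intact; the degenerate subcase of (iii) requires the analogous trick and is the second-hardest point.
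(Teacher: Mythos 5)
The paper does not actually prove Lemma~\ref{l1}: it imports it from Trotignon and Vu\v{s}kovi\'c (Lemma 3.2 of~\cite{nicolas.kristina:two}), so there is no in-paper argument to compare against. Your reconstruction takes the expected route --- exhibiting an explicit star cutset whenever one of (i)--(iv) fails, then deducing (v) --- and the arguments for (i), (ii), (v) and the main case of (iii)/(iv) are correct. In particular, in (i) the choice of a $B_1$-vertex $x$ \emph{outside} the offending component $C$ is exactly the right move: the guaranteed $A_1$--$B_1$ path supplies such an $x$, and $N[x]$ absorbs $B_2=N(C)\setminus C$ while meeting neither $C$ nor $A_2$.

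The one genuine weak point is the degenerate subcase of (iii), where $X_1=\{a\}\cup B_1$ with $A_1=\{a\}$ and $|B_1|\geq 2$. You assert $\{b\}\cup N(b)=\{a,b\}\cup B_2$ and call $B_1\setminus\{b\}$ an independent set. That is automatic when $G$ is bipartite (all of $B_1$ is complete to $B_2\neq\emptyset$, hence lies on one side of the bipartition), but the lemma is stated for an arbitrary graph $G$, and nothing in the definition of a 2-join prevents $B_1$ from containing edges. If $B_1$ is a clique, then $B_1\setminus\{b\}\subseteq N(b)$, the set $G\setminus N[b]$ contains no vertex of $X_1$ at all, and your cutset separates nothing. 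The repair is short: if some $b\in B_1$ has a non-neighbor in $B_1$, then $B_1\setminus N[b]\neq\emptyset$ and is cut off from $A_2$ by $N[b]$ exactly as you intend (all its neighbors lie in $\{a\}\cup B_2\cup B_1$); otherwise $X_1$ is a clique, and for distinct $b,b'\in B_1$ the star $\{b\}\cup\bigl(N(b)\setminus\{b'\}\bigr)$ isolates $b'$ from $A_2$, since $N(b')\subseteq (X_1\setminus\{b'\})\cup B_2\subseteq N[b]$. Because this paper only ever applies the lemma to bipartite graphs, the gap is harmless in context, but it must be closed if the lemma is to be proved as stated.
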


\begin{lemma}\label{l1'}
Let $G$ be a bipartite graph that has no star cutset. If $G$ has a 1-join, then $G$ is a 4-hole.
\end{lemma}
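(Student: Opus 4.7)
The plan is to exploit the structure of a 1-join together with the no-star-cutset hypothesis to force $G$ to be a complete bipartite graph $K_{m,n}$, and then to argue that $m=n=2$, giving the 4-hole.

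Fix a split $(X,Y,A,B)$ of a 1-join of $G$, and pick any $a\in A$. By the definition of a 1-join, $B\subseteq N(a)$, and the only edges between $X$ and $Y$ lie between $A$ and $B$. Consequently, after deleting $\{a\}\cup B$ there are no edges from $X\setminus\{a\}$ to $Y\setminus B$. Since $|X|\geq 2$, the set $X\setminus\{a\}$ is nonempty, so if $Y\setminus B$ were also nonempty then $(a,B)$ would be a star cutset, contradicting the hypothesis. Hence $B=Y$, and symmetrically $A=X$.

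Every vertex of $X$ is therefore adjacent to every vertex of $Y$. Because $G$ is bipartite and $|X|,|Y|\geq 2$, the sets $X$ and $Y$ must each lie entirely on one side of the bipartition of $G$; in particular $X$ and $Y$ are independent sets and $G$ is the complete bipartite graph with parts $X$ and $Y$. If $|X|\geq 3$, then for any $x\in X$ we have $Y\subseteq N(x)$, and deleting $\{x\}\cup Y$ leaves the $|X|-1\geq 2$ vertices of $X\setminus\{x\}$ pairwise non-adjacent, so $(x,Y)$ is a star cutset. By symmetry $|Y|\leq 2$ as well, and so $|X|=|Y|=2$ and $G=K_{2,2}$ is a 4-hole.

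The only step requiring a bit of care is checking that the candidate sets really are star cutsets in the precise sense of the paper (i.e.\ $T=\{x\}$ with the removed neighbors contained in $N(x)$, and at least two components remaining after deletion); but the 1-join axioms deliver the needed adjacencies directly, so no real obstacle arises. No deeper machinery, nor even the balanceable hypothesis, is used---only connectedness of $G$ and the absence of a star cutset.
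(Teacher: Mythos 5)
Your proof is correct and follows essentially the same route as the paper's: first use the absence of a star cutset to force $Y=B$ and $X=A$, then use it again to bound $|A|=|B|=2$, concluding that $G$ is a 4-hole. The extra care you take in verifying the star-cutset conditions and the bipartition structure is sound but does not change the argument.
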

\begin{proof}
  Let $(X,Y,A,B)$ be a split of a 1-join of $G$.  If $Y\setminus
  B\neq\emptyset$, then a vertex from $A$ and set $B$ form a star
  cutset, a contradiction. So $Y=B$, and by symmetry $X=A$. If
  $|A|\geq 3$, then a vertex from $A$ and set $B$ form a star cutset,
  a contradiction. So, by symmetry, $|A|=|B|=2$, and therefore $G$ is
  a 4-hole.
\end{proof}

In 4-hole-free graphs, and also in subcubic graphs, we can reduce
extended star cutset to star cutset. Indeed in a 4-hole-free graph, if
$(x,T,A,R)$ is an extended star cutset with $|T|\geq 2$, then by
definition $|A|\geq 2$ and the complete bipartite graph $A\cup T$
contains a 4-hole, so $|T|=1$ and $(x,T,A,R)$ is a star cutset. In a
subcubic graph $G$ if $(x,T,A,R)$ is an extended star cutset with
$|T|\geq 2$, then $|A|\geq 2$. Since each vertex of $T$ has neighbors
in at most one component of $G\setminus(T\cup A\cup R)$ (because the
graph is subcubic), we see that $G\setminus (\{x\}\cup A\cup R)$ has
at least as many components as $G\setminus(T\cup A\cup R)$. It follows
that $(x,R\cup A)$ is a star cutset of $G$.  So from
Theorems~\ref{cckv-thm}, \ref{cr-thm} and~\ref{yan}, and Lemmas
\ref{l1} and~\ref{l1'}, we get the following decomposition theorem
that we will use in this paper.

\begin{theorem}\label{dt}
Let $G$ be a connected balanceable bipartite graph.
\begin{itemize}
\item If $G$ is 4-hole-free, then $G$ is basic, or has a
  2-join, a 6-join or a star cutset.
\item If $\Delta (G) \leq 3$, then $G$ is basic or is $R_{10}$, or
has a 2-join, a 6-join or a star cutset.
\end{itemize}
\end{theorem}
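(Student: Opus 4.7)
The plan is to derive the statement as a direct consequence of Theorems~\ref{cckv-thm}, \ref{cr-thm} and~\ref{yan}, together with Lemma~\ref{l1'} and the extended-to-plain star cutset reductions already recorded in the paragraph preceding the statement. I will argue by contraposition: assume $G$ is connected and balanceable, satisfies one of the two hypotheses (4-hole-free or subcubic), but is not basic, is not $R_{10}$, and has none of a 2-join, a 6-join, or a star cutset, and I will derive a contradiction.

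The first step is to upgrade ``no star cutset'' to ``no extended star cutset''. The excerpt itself supplies both reductions: in the 4-hole-free case $|T|\geq 2$ would force a 4-hole inside the complete bipartite graph on $T\cup A$, so $|T|=1$ and $(x,T,A,R)$ is literally a star cutset; in the subcubic case, since each vertex of $T$ has at most $3-|A|\leq 1$ neighbor outside $A$, no vertex of $T\setminus\{x\}$ can reach two components of $G\setminus(T\cup A\cup R)$, so $(x,R\cup A)$ is a star cutset. Thus under either hypothesis $G$ has no extended star cutset.

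I then invoke Theorem~\ref{cckv-thm}: with 2-joins, 6-joins, extended star cutsets, and $R_{10}$ all excluded, $G$ must be strongly balanceable. Applying Theorem~\ref{cr-thm} splits into two subcases. If $G$ has a 1-join, Lemma~\ref{l1'} (using that $G$ has no star cutset) forces $G$ to be a 4-hole; but every vertex of a 4-hole has degree $2$, so a 4-hole is basic, contradicting the assumption. Otherwise $G$ is restricted balanceable, and Theorem~\ref{yan} leaves only: basic (contradiction), a cut vertex (which is the star cutset $(x,\emptyset)$, contradiction), or a 2-join (contradiction). Every branch closes.

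I do not expect any genuine obstacle. The only subtle moment is the reduction from extended to plain star cutsets, and that has already been written out in the text just above the theorem statement; the remainder of the argument is simply assembling the cited results in the correct order.
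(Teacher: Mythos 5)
Your proposal is correct and follows essentially the same route as the paper, whose own justification is exactly the paragraph preceding the statement (the extended-star-cutset reductions) combined with Theorems~\ref{cckv-thm}, \ref{cr-thm}, \ref{yan} and Lemma~\ref{l1'}. One tiny point to make explicit for the first bullet: since its conclusion does not offer ``is $R_{10}$'' as an outcome, you should note that $R_{10}$ contains a 4-hole (e.g.\ $x_1x_2x_7x_6x_1$) and hence cannot arise when $G$ is 4-hole-free, rather than simply assuming ``$G$ is not $R_{10}$'' in the contrapositive.
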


We observe that a balanceable bipartite graph $G$ with $\Delta (G)
\leq 3$ is actually \emph{matrix-regular}, as we explain now.  A matrix is
{\em totally unimodular} if every square submatrix has determinant
equal to $0$, $+1$ or $-1$. A $0,\,1$ matrix is {\em regular} if its
nonzero entries can be signed $+1$ or $-1$ so that the resulting
matrix is totally unimodular.  A $0,\,1$ matrix $A$ can be thought of
as a vertex-vertex incidence matrix of a bipartite graph, which we
denote with $G(A)$.  We say that a bipartite graph $G$ is {\em
  matrix-regular} if $G=G(A)$ for some regular $0,\,1$ matrix $A$.  A graph
is {\em eulerian} if all its vertices have even degree. By a theorem
of Camion~\cite{camion}, a bipartite graph is matrix-regular if and only if
there exists a signing of its edges with $+1$ or $-1$ so that the
weight of every induced eulerian subgraph is a multiple of~4.  It now
clearly follows that for a bipartite graph $G$ with $\Delta (G)\leq
3$: $G$ is balanceable if and only if $G$ is matrix-regular.

It is natural to ask why we use Theorem~\ref{dt} in our proof of
Conforti and Rao Conjecture in the subcubic case, instead of Seymour's
decomposition theorem for matrix-regular bipartite graphs~\cite{seymour}.
The answer is that by using Theorem~\ref{dt} we have only to check
whether three cutsets (2-join, 6-join and star cutset) go through our
induction hypothesis, whereas if we used the decomposition theorem in~\cite{seymour} we would have to check five cutsets (1-join, 2-join,
6-join, N-join and M-join, for an explanation
see~\cite{vuskovic}). Furthermore, 2-joins and 6-joins in graphs with
no star cutset have special properties (given in Section
\ref{secnostar}) which are very useful for pushing the induction
hypothesis through them.

\section{Graphs with no star cutset}\label{secnostar}

The following properties of graphs with no star cutsets will be
essential in our proofs.

Let $(X_1,X_2,A_1,A_2,B_1,B_2)$ be a split of a 2-join of a graph $G$.
The {\em blocks of decomposition} of $G$ by this 2-join are graphs $G_1$ and $G_2$ defined as follows. To obtain $G_i$, for $i=1,2$, we start from $G[X_i]$, and first add a vertex $a_{3-i}$, adjacent to all the vertices in $A_i$ and no other vertex of $X_i$, and a vertex $b_{3-i}$ adjacent to all the vertices in $B_i$ and no other vertex of $X_i$. For $i=1,2$, let $Q_{3-i}$ be a path in $G[X_{3-i}]$  with smallest number of edges connecting a vertex in $A_{3-i}$ to a vertex in $B_{3-i}$. For $i=1,2$, add to $G_i$ a {\em marker path} $M_{3-i}$ connecting $a_{3-i}$ and $b_{3-i}$ with length $|E(M_{3-i})|\in\{4,5\}$
having the same parity as $Q_{3-i}$.

The following lemma is proved in~\cite{cckv-balalg}. (Note that the
statement is not the same but the proof of Theorem 4.6 in~\cite{cckv-balalg} shows precisely what we need).

\begin{lemma}\label{l:2j}
  Let $G$ be a bipartite graph with no star cutset. Let $(X_1,X_2)$ be
  a 2-join of $G$, and let $G_1$ and $G_2$ be the corresponding blocks
  of decomposition. Then the following hold:
\begin{enumerate}[(i)]
\item If $G$ is balanceable, then $G_1$ and $G_2$ are balanceable.
\item $G_1$ and $G_2$ have no star cutset.
\item If $G$ has no 6-join, then $G_1$ and $G_2$ have no 6-join.
\end{enumerate}
\end{lemma}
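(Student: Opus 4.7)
The plan is to treat the three items in order, using Truemper's characterization (Theorem~\ref{balanceable-truemper}) for (i) and contrapositive lifting arguments for (ii) and (iii). Throughout, I will rely on four features of the marker path $M_{3-i}$: it is chordless, its length lies in $\{4,5\}$ and matches the parity of a shortest $A_{3-i}$-$B_{3-i}$ path $Q_{3-i}$ in $G[X_{3-i}]$, its internal vertices have degree exactly $2$ in $G_i$, and $a_{3-i}$, $b_{3-i}$ have all their non-marker neighbours in $A_i$, $B_i$ respectively. I will also use Lemma~\ref{l1} freely, since its hypothesis applies to $G$ by assumption.

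For (i), I would assume for contradiction that $G_1$ contains an odd wheel $(H,x)$ or an odd 3-path configuration with paths $P_1,P_2,P_3$. Since $G[X_1]\subseteq G$ and $G$ is balanceable, this forbidden subgraph must meet $M_2\cup\{a_2,b_2\}$. The degree-$2$ interior of $M_2$ can host neither a wheel centre nor an endpoint of a 3-path configuration, so the forbidden subgraph must traverse $M_2\cup\{a_2,b_2\}$ as a single subpath running from a neighbour of $a_2$ in $A_1$ to a neighbour of $b_2$ in $B_1$. Substituting $Q_2$ for this traversal preserves parity and, because $Q_2$ is shortest and the only $X_1$-$X_2$ adjacencies are the complete bipartite patterns $A_1$-$A_2$ and $B_1$-$B_2$, the result is still a chordless path; this yields an odd wheel or odd 3-path configuration in $G$, contradicting balanceability. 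The main obstacle here is enumerating the ways in which $a_2$ or $b_2$ may appear in the forbidden object (notably $a_2$ or $b_2$ acting as a wheel centre with three neighbours on the rim); the length $4$-or-$5$ choice is precisely what guarantees that each such case produces, after substitution, a forbidden object of the same type and parity in $G$.

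For (ii), I would suppose that $G_1$ has a star cutset $(x,R)$ and lift it to a star cutset of $G$, deriving the desired contradiction. Define $S\subseteq V(G)$ by taking $R\cap X_1$, replacing $a_2\in R$ (if present) by $A_2$, replacing $b_2\in R$ (if present) by $B_2$, and treating an internal marker vertex of $M_2\cap R$ by pushing it to the nearer endpoint and then substituting accordingly. If $x\in X_1$ keep $x$ as centre; if $x$ is an added or internal marker vertex, choose as centre an appropriate vertex of $A_2\cup B_2$ whose $G$-neighbourhood already contains the substituted part of $S$. To verify that $S$ disconnects $G$, I invoke Lemma~\ref{l1}: every component of $G[X_2]$ meets both $A_2$ and $B_2$, so the $X_2$ side cannot glue together components of $G_1\setminus(\{x\}\cup R)$ that were separated across the marker. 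The main obstacle here is the case $x\in\{a_2,b_2\}\cup V(M_2)$, where I must verify that the substituted centre lies in $N_G$ of all the substituted neighbours; the complete bipartite structure of the 2-join handles this cleanly.

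For (iii), assume $G_1$ admits a 6-join with split $(Y_1,Y_2,A_1',\dots,A_6')$ and show that $G$ has a 6-join (or, failing that, a star cutset, which is forbidden by hypothesis). First rule out that $M_2\cup\{a_2,b_2\}$ is split across $Y_1$ and $Y_2$: the degree-$2$ interior of $M_2$ cannot host three consecutive special sets, and the boundary conditions of the 6-join would then force extra adjacencies inconsistent with the structure of $M_2$. So the marker lies entirely on one side, say $Y_1$; then the 6-join pattern forces $a_2$ and $b_2$ into prescribed $A_i'$'s (consecutive around the 6-cycle or coinciding with distinguished ones). Substituting $X_2$ for $M_2\cup\{a_2,b_2\}$ yields a partition of $V(G)$ into $(Y_1\setminus(M_2\cup\{a_2,b_2\}))\cup$ (suitable part) on one side and $(Y_2)\cup X_2$ on the other; the size constraint $\ge 4$ on both sides follows from Lemma~\ref{l1}(v), and the absence of extra adjacencies uses that $X_2$ connects to $X_1$ only through $A_2$ and $B_2$. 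The hardest subcase I anticipate is when $a_2$ and $b_2$ land in non-adjacent $A_i'$'s: here I would argue that the chordless marker path of length $\ge 4$ forces an incompatible parity or adjacency pattern, ruling the case out.
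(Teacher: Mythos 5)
First, a point of reference: the paper does not prove this lemma at all --- it is imported verbatim from the literature (``the proof of Theorem 4.6 in~\cite{cckv-balalg} shows precisely what we need''), so there is no in-paper argument to match yours against. Your outline is the standard one, but as written it has a genuine gap in part~(ii). Your recipe is to lift a star cutset $(x,R)$ of $G_1$ by replacing $a_2\in R$ by $A_2$ (resp.\ $b_2$ by $B_2$) and, when $x$ itself is a marker vertex, re-centering at ``an appropriate vertex of $A_2\cup B_2$ whose $G$-neighbourhood already contains the substituted part of $S$.'' Consider the case $x=a_2$, so $R\subseteq A_1\cup\{m_1\}$, and let $C$ be a component of $G_1\setminus(\{a_2\}\cup R)$ with $C\cap A_1\neq\emptyset$. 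In $G$ one has $N_G(C)\setminus C\subseteq (R\cap A_1)\cup A_2$, and when $|A_2|\geq 2$ this set is \emph{not} contained in any closed neighbourhood: $A_2$ is a stable set, so no $y\in A_2$ has $A_2\setminus\{y\}$ in its neighbourhood, while no vertex of $A_1$ is adjacent to $R\cap A_1$. The natural lift is therefore only an \emph{extended} star cutset (with $T=R\cap A_1$, $A=A_2$), which the hypothesis does not forbid. Your appeal to Lemma~\ref{l1}(i) addresses why the $X_2$ side cannot reconnect the pieces, but that was never the issue --- the issue is the star shape of the lifted cutset, and closing this case requires a genuinely different argument (this is where the proof in~\cite{cckv-balalg} does real work).

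Two smaller remarks. In~(i), your claim that the forbidden configuration must traverse $M_2\cup\{a_2,b_2\}$ as a single $A_1$-to-$B_1$ subpath is too restrictive: since the interior of $M_2$ has degree~$2$, a configuration either contains all of $M_2$ or avoids its interior entirely, and in the latter case it may still contain $a_2$ and/or $b_2$ with both configuration-neighbours in $A_1$ (resp.\ $B_1$). There no substitution of $Q_2$ is wanted; instead one replaces $a_2,b_2$ by vertices $y\in A_2$, $z\in B_2$, and when both occur one must choose $y,z$ non-adjacent, which is exactly what Lemma~\ref{l1}(iii) provides --- this case should be stated, not subsumed under ``enumerating the ways $a_2$ or $b_2$ may appear.'' In~(iii), the placement of the marker path relative to a $\{2,6\}$-join of the block is precisely the delicate case analysis carried out at length in Lemma~\ref{6jl2}; asserting that the hard subcase is ``ruled out by an incompatible parity or adjacency pattern'' is a placeholder rather than a proof. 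The overall strategy is the right one, but (ii) in particular cannot be completed along the lines you describe.
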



Let $(X_1,X_2,A_1, \ldots ,A_6)$ be a split of a 6-join of a graph $G$.
The {\em blocks of decomposition} of $G$ by this 6-join are graphs $G_1$ and $G_2$ defined as follows. For $i=1,\ldots,6$ let $a_i$ be any vertex of $A_i$. Then $G_1=G[X_1\cup\{a_2,a_4,a_6\}]$ and $G_2=G[X_2\cup\{a_1,a_3,a_5\}]$.
Nodes $a_2,a_4,a_6$ (resp.\ $a_1,a_3,a_5$) are called the {\em marker nodes} of $G_1$ (resp.\ $G_2$).

\begin{lemma}\label{6jl1}
  Let $G$ be a bipartite graph with no star cutset. Let $(X_1,X_2,A_1,
  \ldots ,A_6)$ be a split of a 6-join of $G$, and $G_1$ and $G_2$ the
  corresponding blocks of decomposition. Then the following hold:
\begin{enumerate}[(i)]
\item $X_1 \setminus (A_1 \cup A_3 \cup A_5 )\neq \emptyset$ and $X_2
  \setminus (A_2 \cup A_4 \cup A_6 )\neq \emptyset$.
\item If $C$ is a connected component of $G[X_1 \setminus (A_1 \cup
  A_3 \cup A_5)]$ (resp.\ $G[X_2 \setminus (A_2 \cup A_4 \cup A_6)]$),
  then a node of $A_i$, for every $i=1,3,5$ (resp.\ $i=2,4,6$) has a
  neighbor in $C$.
\item If $G$ is 4-hole-free or $\Delta (G) \leq 3$, then $|A_i|=1$ for
  every $i\in\{ 1, \ldots ,6\}$, and in particular every node of
  $\cup_{i=1}^{6} A_i$ is of degree at least 3 in $G$.
\item If $G$ is balanceable, then so are $G_1$ and $G_2$.
\item If $G$ is 4-hole-free, then $G_1$ and $G_2$ do not have star
  cutsets.
\end{enumerate}
\end{lemma}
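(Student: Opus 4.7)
The plan is to prove the five parts in order, each leveraging the previous ones and the no-star-cutset hypothesis on $G$. For (i), I would argue by contradiction: if $X_1 = A_1 \cup A_3 \cup A_5$ then the bipartite structure of the 6-join forces $G[X_1]$ to be edgeless, and since $|X_1|\geq 4$ some $A_i$ contains two vertices $a, a'$ sharing the same neighborhood $A_{i-1} \cup A_{i+1}$; then the star $(a, A_{i-1} \cup A_{i+1})$ isolates $a'$, contradicting the hypothesis. For (ii), supposing (say) $A_1$ has no neighbor in a component $C$ of $G[X_1 \setminus (A_1 \cup A_3 \cup A_5)]$, I would note that vertices of $X_1 \setminus (A_1\cup A_3\cup A_5)$ have no $X_2$-neighbors at all by the 6-join definition, so the external neighbors of $C$ lie in $A_3 \cup A_5$; picking any $a \in A_4$, the star $(a, A_3 \cup A_5)$ isolates $C$ while $A_1\neq\emptyset$ persists on the other side.

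For (iii), the 4-hole-free case is direct: if $|A_i| \geq 2$ with $a, a'\in A_i$, then for any $a_{i-1}\in A_{i-1}$, $a_{i+1}\in A_{i+1}$ the four vertices $a_{i-1}, a, a_{i+1}, a'$ induce a 4-hole (chordless since $a_{i-1}, a_{i+1}$ lie on one bipartition side and $a, a'$ on the other). The subcubic case is more delicate: each vertex of $A_i$ is adjacent to all of $A_{i-1} \cup A_{i+1}$, so $|A_{i-1}| + |A_{i+1}| \leq 3$. Assuming $|A_1|\geq 2$ I would iterate these bounds together with (ii): if $|A_2|+|A_6|=3$ then each $a \in A_1$ exhausts its degree on the 6-join and has no neighbor in $X_1 \setminus (A_1\cup A_3\cup A_5)$, contradicting (i) and (ii); hence $|A_2|=|A_6|=1$, and similarly $|A_4|=1$, but then $a_2$ has $N(a_2)\supseteq A_1\cup A_3$ of size $3$ and no neighbor in $X_2 \setminus (A_2\cup A_4\cup A_6)$, again contradicting (i) and (ii). Therefore $|A_i|=1$ for every $i$. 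The ``degree at least 3'' claim then follows because each $a_i$ has the two 6-join neighbors $a_{i-1}, a_{i+1}$ together with at least one further neighbor in an extra component guaranteed by (i) and (ii).

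Part (iv) is essentially immediate: after choosing a single representative $a_i \in A_i$, the block $G_1 = G[X_1 \cup \{a_2,a_4,a_6\}]$ is an induced subgraph of $G$, and by Theorem~\ref{balanceable-truemper} balanceability (the exclusion of odd 3-path configurations and odd wheels) is hereditary. For (v), (iii) lets me identify $a_i$ with the unique vertex of $A_i$, so $G_1 = G[X_1 \cup A_2\cup A_4\cup A_6]$ is an induced subgraph of $G$. Given a putative star cutset $(y,R)$ of $G_1$ with $S=\{y\}\cup R$, I would show $S$ disconnects $G$, contradicting the hypothesis. The extra vertices $X_2 \setminus (A_2\cup A_4\cup A_6)$ are attached to $G_1$ only through the markers, and by (ii) every extra component is adjacent to all of $a_2, a_4, a_6$. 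I would split on the intersection $\{a_2,a_4,a_6\}\cap S$; the key observation is that any component of $G_1\setminus S$ contained in $X_1 \setminus (A_1\cup A_3\cup A_5)$ has no edges to $X_2$ at all and therefore survives in $G\setminus S$. The main obstacle is the subcase where $a_2, a_4, a_6 \notin S$ lie in three distinct components of $G_1\setminus S$: extras could glue them together. I would resolve this by noting that the only $G_1$-paths between distinct markers go through $a_1, a_3, a_5$, so this subcase forces $a_1, a_3, a_5 \in S$; then every remaining component of $G_1\setminus S$ lies inside $X_1 \setminus (A_1\cup A_3\cup A_5)$ and hence persists in $G\setminus S$, giving the required disconnection.
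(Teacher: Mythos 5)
Your treatments of (i)--(iv) are correct and essentially the paper's own arguments: (i) and (ii) use exactly the same star cutsets ($\{a\}\cup A_{i-1}\cup A_{i+1}$ and $\{a_4\}\cup A_3\cup A_5$), (iv) is heredity of balanceability, and your subcubic case of (iii) is a valid (and more detailed) instantiation of the paper's one-line appeal to (i) and (ii). The problem is in (v). Your overall strategy (show that the star $S=\{y\}\cup R$ of $G_1$ still disconnects $G$) is the right one, but the case analysis is incomplete and the one hard subcase you do address is not actually resolved. First, the obstruction is not only the subcase where $a_2,a_4,a_6$ lie in three distinct components of $G_1\setminus S$: the set $S$ also fails to disconnect $G$ whenever every component of $G_1\setminus S$ contains a marker and the surviving markers are spread over exactly two components (say $a_2$ in one and $a_4,a_6$ in the other), or when some marker lies in $S$; your sketch says nothing about these. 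Second, in the three-component subcase your conclusion ``$a_1,a_3,a_5\in S$, hence every remaining component lies inside $X_1\setminus(A_1\cup A_3\cup A_5)$ and persists'' does not yield disconnection of $G$: with $a_1,a_3,a_5\in S$ the markers become isolated vertices of $G_1\setminus S$ that the extra components of $X_2$ glue into a single big component, and nothing you have written guarantees that a further, marker-free component exists ($X_1\setminus(A_1\cup A_3\cup A_5)$ could a priori be swallowed by $S$).

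The missing ingredient --- and the heart of the paper's proof of (v) --- is to apply 4-hole-freeness to the cutset itself. After ruling out $y\in\{a_2,a_4,a_6\}$ via (ii) (everything reattaches through the untouched vertices of $\{a_1,a_3,a_5\}$) and handling $y\in A_1\cup A_3\cup A_5$ directly, one has $y\in X_1\setminus(A_1\cup A_3\cup A_5)$; then $y$ is adjacent to at most one of $a_1,a_3,a_5$, since two of them together with $y$ and the intermediate marker would induce a 4-hole (e.g.\ $ya_1a_2a_3y$). Hence $S$ meets at most one of $a_1,a_3,a_5$, and the path $a_2a_3a_4a_5a_6$ (up to rotation) keeps all three markers in one component of $G_1\setminus S$; some other component then avoids all markers, has no neighbour in $X_2\setminus\{a_2,a_4,a_6\}$, and survives in $G\setminus S$. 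The same 4-hole argument shows that both of the ``bad'' marker distributions above are in fact vacuous, since each forces two pairwise non-adjacent vertices of $\{a_1,a_3,a_5\}$ into the star $S$. One further small repair: your ``key observation'' should be stated for any component of $G_1\setminus S$ avoiding $\{a_2,a_4,a_6\}$, not only for those contained in $X_1\setminus(A_1\cup A_3\cup A_5)$; such a component may contain vertices of $A_1\cup A_3\cup A_5$, yet it still has no neighbour outside $V(G_1)$ and therefore survives.
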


\begin{proof}
Note that $G$ is bipartite so there are no edges in
$A_1\cup A_3\cup A_5$ nor in $A_2\cup A_4\cup A_6$. 

Suppose that $X_1 \setminus (A_1 \cup A_3 \cup A_5 )= \emptyset$.
Then w.l.o.g.\ $|A_1|\geq 2$, and hence for a node $a_1 \in A_1$, $\{
a_1 \} \cup A_2 \cup A_6$ is a star cutset of $G$, a contradiction.
Therefore~(i) holds.

Let $C$ be a connected component of
$G[X_1 \setminus (A_1 \cup A_3 \cup A_5 )]$ and suppose that no node of $A_1$ has a neighbor
in $C$. Then for a node $a_4 \in A_4$, $\{ a_4 \} \cup A_3 \cup A_5$ is a star cutset of $G$ separating
$C$ from the rest, a contradiction.
 Therefore by symmetry,~(ii) holds.

 If $G$ is 4-hole-free then clearly $|A_i|=1$ for every $i\in\{ 1, \ldots ,6\}$, and if
 $\Delta (G) \leq 3$ then the same holds by~(i) and~(ii),
 therefore, (iii) holds.

 Since $G_1$ and $G_2$ are induced subgraphs of $G$,~(iv) holds.

 To prove (v) assume $G$ is 4-hole-free and w.l.o.g.\ $G_1$ has a star cutset $(x,R)$.
 Let $a_2,a_4,a_6$ be the marker nodes of $G_1$.
 By~(ii), $x \not\in \{ a_2,a_4,a_6 \}$. If $x \in A_1$, then $(x,R \cup A_2 \cup A_6)$ is a star cutset
 of $G$, a contradiction. Therefore by symmetry, $x \in X_1 \setminus (A_1 \cup A_3 \cup A_5)$.
 Since $G$ is 4-hole-free $R$ may contain nodes from at most one of the sets $A_1,A_3,A_5$,
 and hence $a_2,a_4,a_6$ are all contained in the same connected component of
 $G_1 \setminus (\{ x \} \cup R)$. It follows that $(x,R)$ is also a star cutset of $G$, a contradiction.
 Therefore (v) holds.
\end{proof}

We observe that property (v) above is not true in general for balanceable graphs. On the other hand, it is true for subcubic balanceable graphs. Since we will use a different technique to prove the main result for subcubic balanceable graphs than the one we will use for linear balanceable graphs, we will not need this result.

\medskip

A 2-join $(X_1,X_2)$ of $G$ is a {\em minimally-sided 2-join} if for some $i\in \{ 1,2 \}$ the following holds: for every 2-join $(X_1',X_2')$ of $G$, neither $X_1'\subsetneq X_i$ nor $X_2'\subsetneq X_i$. In this case $X_i$ is a {\em minimal side} of this minimally-sided 2-join.

\begin{lemma}[Trotignon and Vu\v{s}kovi\'c~\cite{nicolas.kristina:two}]\label{extreme}
Let $G$ be a graph with no star cutset. Let
$(X_1,X_2,A_1,A_2,B_1,B_2)$ be a split of a minimally-sided 2-join of
$G$ with $X_1$ being a minimal side, and let $G_1$ and $G_2$ be the
corresponding blocks of decomposition. Then the following hold:
\begin{enumerate}
\item $|A_1|\geq 2$, $|B_1|\geq 2$, and in particular all the vertices of $A_2\cup B_2$ are of degree at least 3.
\item If $G_1$ and $G_2$ do not have star cutsets, then $G_1$ has no 2-join.
\end{enumerate}
\end{lemma}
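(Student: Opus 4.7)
For part~(1), my plan is to argue by contradiction: suppose $|A_1|=1$ with $A_1=\{a_1\}$ (the case $|B_1|=1$ is symmetric), and construct a 2-join of $G$ with one side strictly contained in $X_1$, contradicting minimality. The natural construction is to move $a_1$ across the cut: take $X_1'=X_1\setminus\{a_1\}$ and $X_2'=X_2\cup\{a_1\}$, with candidate special sets $A_1'=N(a_1)\cap X_1$, $A_2'=\{a_1\}$, $B_1'=B_1$, $B_2'=B_2$. When $a_1$ has no neighbor in $B_1$, this is a valid 2-join; the only non-trivial axiom to verify is that $G[X_1']$ contains a path from $A_1'$ to $B_1'$, which follows because otherwise $\{a_1\}$ would be a cut vertex of $G$ and hence a star cutset, contradicting the hypothesis. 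Since $X_1'\subsetneq X_1$, this contradicts the minimality of $X_1$. When $a_1$ has a neighbor in $B_1$, the naive split fails (vertices in $B_1\cap N(a_1)$ would see both $a_1$ and $B_2$ across the cut), and I would treat this sub-case by either exhibiting a star cutset at $a_1$ directly---for instance $(a_1, A_2\cup B_1)$ when $a_1$ is adjacent to all of $B_1$---or by moving subsets of $B_1\cap N(a_1)$ to the $X_2'$-side and iterating the argument. The ``degree at least~$3$'' conclusion then follows at once: any $v\in A_2$ is complete to $A_1$, now of size $\geq 2$, and has at least one further neighbor in $X_2$ by Lemma~\ref{l1}(ii), and symmetrically for $B_2$.

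For part~(2), suppose $G_1$ has a 2-join with split $(Y_1,Y_2,P_1,P_2,Q_1,Q_2)$. The plan is to pull this 2-join back to a 2-join of $G$ whose $Y_1$-side sits strictly inside $X_1$, again contradicting minimality. The key object is the marker path $M_2$: a chordless path of length $4$ or $5$ whose internal vertices have degree exactly $2$ in $G_1$ and whose endpoints $a_2,b_2$ attach only to $A_1$ and $B_1$ respectively. I would first show that $V(M_2)$ lies entirely in $Y_1$ or $Y_2$, say $Y_2$: any cut of $M_2$ would force a special set to be a small sub-path of $M_2$, producing either a violation of the ``$G[X_i]$ is not a chordless path when $|A_i|=|B_i|=1$'' clause of the 2-join definition or a star cutset at $a_2$ or $b_2$ in $G_1$, both forbidden by hypothesis and Lemma~\ref{l:2j}(ii). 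Next I would check that $Y_2\supsetneq V(M_2)$, since $Y_2=V(M_2)$ would again put us in the forbidden chordless-path case. Finally, the split $(Y_1,\,V(G)\setminus Y_1,\,A_1\cap Y_1,\,A_2\cup(P_2\setminus V(M_2)),\,B_1\cap Y_1,\,B_2\cup(Q_2\setminus V(M_2)))$ is a 2-join of $G$---using that in $G_1$ the marker endpoints $a_2,b_2$ attach exactly to $A_1,B_1$ within $X_1$---with $Y_1\subsetneq X_1$, yielding the contradiction.

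The main obstacle is the marker-path case analysis in part~(2): ruling out, one configuration at a time, the finitely many ways $M_2$ could be split by a 2-join of $G_1$. The short length of $M_2$ and its degree-$2$ interior keep this finite, but each sub-case must be matched carefully against the 2-join definition (especially its chordless-path exclusion) and the no-star-cutset hypothesis on $G_1$ provided via Lemma~\ref{l:2j}(ii). In part~(1), the sub-case where $a_1$ has a neighbor in $B_1$ is similarly delicate and requires either the direct star-cutset argument or an iterative rebalancing to extract the contradiction.
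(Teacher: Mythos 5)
First, a remark on the comparison you asked for: the paper does not prove this lemma at all --- it is imported from Trotignon and Vu\v{s}kovi\'c~\cite{nicolas.kristina:two} --- so the only thing to assess is whether your argument stands on its own. Your overall strategy (contradict minimality of $X_1$ by pushing $a_1$, respectively the $Y_1$-side of a 2-join of $G_1$, across the cut) is the natural one, but part~(1) has a genuine gap in exactly the sub-case you flag as delicate, and neither of your proposed repairs works. The fallback star cutset $(a_1,A_2\cup B_1)$ requires $B_1\subseteq N(a_1)$, which \emph{never} happens: Lemma~\ref{l1}~(iii) guarantees that $a_1$ has a non-neighbor in $B_1$, so that branch of your argument is vacuous. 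And ``moving subsets of $B_1\cap N(a_1)$ to the $X_2'$-side and iterating'' is not a legal move: a vertex $b\in B_1\cap N(a_1)$ may have arbitrary further neighbors inside $X_1$, so after transferring it the edges between the two sides no longer split into two complete bipartite bundles with disjoint attachments, and nothing forces the process to terminate in a 2-join. What actually closes this case is a star cutset centred at $b$ rather than at $a_1$: since $A_1=\{a_1\}\subseteq N(b)$ and $N(b)\cap X_2=B_2$, every edge leaving $X_1\setminus(\{b\}\cup N(b))$ towards $X_2\setminus B_2$ would have to be an $A_1$--$A_2$ edge through the deleted vertex $a_1$, so $(b,N(b))$ is a star cutset unless $X_1\subseteq \{b\}\cup N(b)$; and in that latter case, taking $b''\in B_1\setminus N(a_1)$ (nonempty by Lemma~\ref{l1}~(iii)), the set $\{b\}\cup(N(b)\setminus\{b''\})$ isolates $b''$ from $A_2$, again a star cutset. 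Without an argument of this kind your Case~B is open and part~(1) is unproved.

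Part~(2) is sound in outline, but the final split you write down is not correct: if $(Y_1,Y_2,P_1,P_2,Q_1,Q_2)$ is the 2-join of $G_1$ with $V(M_2)\subseteq Y_2$, the special sets of the pulled-back 2-join of $G$ on the $Y_1$-side must be $P_1$ and $Q_1$ (with $a_2$, resp.\ $b_2$, replaced by $A_2$, resp.\ $B_2$, on the other side whenever they occur in $P_2$ or $Q_2$), not $A_1\cap Y_1$ and $B_1\cap Y_1$; the latter sets need not coincide with $P_1,Q_1$ and need not be completely joined to your proposed $A_2\cup(P_2\setminus V(M_2))$. You also still owe the verification of the connectivity and non-chordless-path clauses of the 2-join definition for the new split, though these do follow routinely once the special sets are chosen correctly and one uses Lemma~\ref{l1} in $G_1$.
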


A partition $(X_1,X_2)$ of $V(G)$ is a {\em $\{ 2,6 \}$-join} if it is a 2-join or a 6-join of $G$.
It is a {\em minimally-sided $\{ 2,6 \}$-join} if for some $i\in \{ 1,2 \}$ the following holds:
for every $\{ 2,6 \}$-join $(X_1',X_2')$ of $G$, neither $X_1' \subsetneq X_i$
nor $X_2' \subsetneq X_i$. In this case $X_i$ is a {\em minimal side} of this minimally-sided
$\{ 2,6\}$-join.

\begin{lemma}\label{6jl2}
Let $G$ be a 4-hole-free bipartite graph.
Let $(X_1,X_2)$ be a minimally-sided $\{ 2,6 \}$-join of $G$, with $X_1$ being a minimal side.
If $G$ has no star cutset, then the block of decomposition $G_1$ has no $\{ 2,6 \}$-join.
\end{lemma}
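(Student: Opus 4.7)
The plan is to argue by contradiction: assume $G_1$ admits a $\{2,6\}$-join $(Y_1,Y_2)$, and construct from it a $\{2,6\}$-join of $G$ one of whose sides is strictly contained in $X_1$, contradicting the minimality of $X_1$.

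First I would gather two preliminary facts about $G_1$. (a) $G_1$ has no star cutset: this is Lemma~\ref{l:2j}(ii) when $(X_1,X_2)$ is a $2$-join, and Lemma~\ref{6jl1}(v) when it is a $6$-join (using the assumption that $G$ is $4$-hole-free). (b) $G_1$ is $4$-hole-free: any $4$-hole of $G_1$ avoiding the internal vertices of $M_2$ lies in $G[X_1]\cup\{a_2,b_2\}$, and since $a_2,b_2$ have the same $X_1$-neighborhoods as any vertex of $A_2,B_2$ respectively, such a $4$-hole lifts to $G$; any $4$-hole passing through an internal vertex of $M_2$ must be contained in $M_2$ because internal marker vertices have both their neighbors on $M_2$, contradicting chordlessness of $M_2$.

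Next I would analyze where $M_2 = a_2m_1\cdots m_{k-1}b_2$, with $k\in\{4,5\}$, sits inside $(Y_1,Y_2)$. Every internal $m_j$ has degree exactly $2$ in $G_1$. If $(Y_1,Y_2)$ is a $6$-join, Lemma~\ref{6jl1}(iii) applied to $G_1$ (valid by (a),(b)) yields $|C_i|=1$ for every $i$, and combined with the degree-$2$ constraint this shows the sequence of $C_i$'s visited along $M_2$ never backtracks at an internal step, so it is a straight walk of length $k$ on the $6$-cycle $C_1C_2\cdots C_6$. If $(Y_1,Y_2)$ is a $2$-join, Lemma~\ref{l1} applies to $G_1$, and the same degree-$2$ constraint forces $M_2$ to cross between $Y_1$ and $Y_2$ only through special-set vertices, and in a very restricted way. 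In every case $M_2$ either lies entirely in one side of $(Y_1,Y_2)$ or straddles it through at most one pair of special entries.

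Then I would transfer $(Y_1,Y_2)$ to $G$. Take $Y^G_2 = Y_i\cap X_1$ for a side $i$ disjoint from the bulk of $M_2$, and $Y^G_1 = V(G)\setminus Y^G_2$; inherit candidate special sets from those of $(Y_1,Y_2)$ intersected with $X_1$, replacing any occurrence of $a_2$ (respectively $b_2$) as a special vertex by $A_2$ (respectively $B_2$). The verification that $(Y^G_1,Y^G_2)$ is a genuine $\{2,6\}$-join of $G$ rests on the identity $N_G(A_2)\cap X_1 = N_{G_1}(a_2)\setminus\{m_1\}$ and its analogue for $B_2,b_2$. To conclude, I would verify $Y^G_2\subsetneq X_1$: if $Y^G_2 = X_1$, then $V(M_2)$ is contained in, hence equal to, the opposite side $Y_{3-i}$, making $G_1[Y_{3-i}]$ the chordless path $M_2$; in the $2$-join case this violates the ``no chordless path when all specials are singletons'' clause (since only $a_2,b_2\in V(M_2)$ have neighbors in $X_1$, forcing both specials to have size~$\leq 1$), and in the $6$-join case it contradicts Lemma~\ref{6jl1}(i) applied to $G_1$, because $Y_{3-i}\setminus(C_{i'}\cup C_{i'+2}\cup C_{i'+4})$ would be empty. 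Hence $Y^G_2\subsetneq X_1$, contradicting the minimality of $X_1$.

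The main obstacle I expect is the case analysis in the third step: ensuring that the candidate partition really satisfies every clause of the $\{2,6\}$-join definition (nonemptiness of all special sets, the chordless-path condition in the $2$-join case, and the size bounds $|Y^G_i|\ge 4$). The subtlest subcases are those in which $a_2$ or $b_2$ act as special vertices of $(Y_1,Y_2)$ or $M_2$ straddles the join; the transfer to $G$ then depends essentially on the fact that $\{a_2\}$ and $A_2$, and $\{b_2\}$ and $B_2$, see the same vertices of $X_1$.
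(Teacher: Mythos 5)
Your overall strategy (push a $\{2,6\}$-join of $G_1$ back to a $\{2,6\}$-join of $G$ with one side strictly inside $X_1$, contradicting minimality) is the right one, and your preliminary facts (a) and (b) are correct. But the proposal has a genuine gap, in two places. First, all of your substantive analysis concerns the marker path $M_2$, which exists only when $(X_1,X_2)$ is a 2-join of $G$; the case where $(X_1,X_2)$ is a 6-join (so that $G_1=G[X_1\cup\{a_2,a_4,a_6\}]$ has three marker \emph{nodes} rather than a marker path) is not addressed, and it is not routine. In that case, if $G_1$ has a 2-join whose split cuts through the 6-hole $a_1a_2\ldots a_6$ (say $a_2\in X_2'\setminus(A_2'\cup B_2')$, $a_4\in B_1'$, $a_6\in A_1'$), the degree-2 marker nodes force $A_2'=\{a_1\}$ and $B_2'=\{a_3\}$, and the natural transfer back to $G$ fails outright: $a_2$ would need to belong to both special sets on the other side. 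There is no smaller $\{2,6\}$-join of $G$ to extract here; the paper instead shows that $(a_2,\{a_1,a_3\})$ is a star cutset of $G$, contradicting the hypothesis. A uniform ``transfer and contradict minimality'' template cannot produce that contradiction.

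Second, in the subcase where $(X_1,X_2)$ is a 2-join of $G$ and $(Y_1,Y_2)$ is a 2-join of $G_1$, your claim that the degree-2 constraint on $M_2$ restricts matters enough for the transfer to succeed is essentially the statement of Lemma~\ref{extreme}(ii) (Trotignon and Vu\v{s}kovi\'c): for a minimally-sided 2-join whose blocks have no star cutsets, the block $G_1$ has no 2-join at all. The paper cites that lemma precisely to avoid redoing this analysis, which requires handling the configurations where $a_2$ or $b_2$ lies in a special set of $(Y_1,Y_2)$ with the opposite special set reduced to an internal marker vertex, and where $A_1$ or $B_1$ is split between $Y_1$ and $Y_2$; your one-sentence justification does not cover these. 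If you invoke Lemma~\ref{extreme}(ii) (legitimate, since a minimally-sided $\{2,6\}$-join that is a 2-join is in particular a minimally-sided 2-join), the only surviving subcase when $(X_1,X_2)$ is a 2-join is that $(Y_1,Y_2)$ is a 6-join of $G_1$, and there your transfer does work, essentially as in the paper (using Lemma~\ref{6jl1}(ii)--(iii) to place $M_2$ on one side and in at most one of the sets $A_2',A_4',A_6'$).
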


\begin{proof}
  Assume the contrary, and let $(X_1',X_2')$ be a $\{ 2,6 \}$-join of $G_1$. We
  now consider the following cases.
  \\
  \\
  {\bf Case 1:} $(X_1,X_2)$ is a 2-join of $G$.
  \\
  By Lemmas~\ref{l:2j}~(ii) and~\ref{extreme}~(ii), $(X_1',X_2')$ is a
  6-join of $G_1$, say with split $(X_1',X_2',A_1', \ldots ,A_6')$.  Let
  $P_2$ be the marker path of $G_1$. By Lemma~\ref{6jl1}~(iii), we may
  assume w.l.o.g.\ that $V(P_2) \subseteq X_2'$. If $V(P_2) \subseteq
  X_2' \setminus (A_2' \cup A_4'\cup A_6')$, then clearly
  $(X_1',(X_2'\setminus V(P_2)) \cup X_2)$ is a 6-join of $G$ that
  contradicts the choice of $(X_1,X_2)$. So $V(P_2)\cap (A_2'\cup A_4'
  \cup A_6')\neq \emptyset$.  By Lemma \ref{6jl1}~(ii), we may assume
  w.l.o.g.\ that $V(P_2)\cap (A_4'\cup A_6')=\emptyset$.  But then
  $(X_1',(X_2'\setminus V(P_2)) \cup X_2, A_1',A_2,A_3',A_4',A_5',A_6')$
  is a split of a 6-join of $G$ that contradicts the choice of
  $(X_1,X_2)$.
  \\
  \\
  {\bf Case 2:} $(X_1,X_2)$ is a 6-join of $G$.
  \\
  Let $(X_1,X_2,A_1, \ldots ,A_6)$ be the split of this 6-join, and
  let $a_2,a_4,a_6$ be the marker nodes of $G_1$. We now consider the
  following two cases.
  \\
  \\
  {\bf Case 2.1:} $(X'_1,X'_2)$ is a 6-join of $G_1$.
  \\
  Let $(X_1',X_2',A_1', \ldots ,A_6')$ be the split of this 6-join. By
  Lemma~\ref{6jl1}~(iii) we may assume w.l.o.g.\ that $\{ a_2,a_4,a_6
  \} \subseteq X_2'\setminus (A_2' \cup A_4' \cup A_6' )$.  But then
  $(X_1',X_2' \cup X_2)$ is a 6-join of $G$ that contradicts the
  choice of $(X_1,X_2)$.
  \\
  \\
  {\bf Case 2.2:} $(X'_1,X'_2)$ is a 2-join of $G_1$.
  \\
  Let $(X_1',X_2',A_1',A_2',B_1',B_2')$ be the split of this 2-join.
  By Lemma~\ref{6jl1}~(iii), let $A_1=\{ a_1\}$, $A_3=\{ a_3\}$ and
  $A_5=\{ a_5\}$, and let $H$ be the 6-hole induced by $\{ a_1, \ldots
  ,a_6 \}$.  First suppose that both $X_1'\setminus (A_1' \cup B_1')$
  and $X_2'\setminus (A_2' \cup B_2')$ contain a node of $H$. Then
  w.l.o.g.\ we may assume that $a_2\in X_2'\setminus (A_2'\cup B_2')$,
  $a_4 \in B_1'$ and $a_6 \in A_1'$. Since nodes $a_2,a_4$ and $a_6$
  are all of degree~2 in $G_1$, it follows that $A_2'=\{ a_1 \}$ and
  $B_2'=\{ a_3\}$, and hence by Lemma~\ref{6jl1}~(iii) $(a_2, \{
  a_1,a_3 \})$ is a star cutset of $G$, a contradiction.

So we may assume w.l.o.g.\ that $(X_2'\setminus (A_2' \cup B_2')) \cap V(H)=\emptyset$.
By Lemma~\ref{l1}~(ii) and since $a_2,a_4,a_6$ are all of degree~2 in $G_1$, it follows that in
fact w.l.o.g.\ we may assume that $V(H) \cap X_2'\subseteq A_2'$.
By Lemma~\ref{l1}~(ii), every node of $A_2'$ has a neighbor in $X_2'$, and hence (since $a_2,a_4,a_6$ are all of degree~2 in $G_1$) $\{ a_2,a_4,a_6\} \subseteq X_1'$. But then $(X_1' \cup X_2,X_2')$ is a 2-join of $G$ that contradicts the choice of $(X_1,X_2)$.
\end{proof}

\section{Linear  balanceable graphs}\label{squarefree}

A {\em double star cutset} of a connected graph $G$ is a set $S$ of vertices such that $G\setminus S$ is disconnected and $S$ contains two adjacent vertices $u$ and $v$ such that every vertex of $S$ is adjacent to at least one of $u$ or $v$. Note that a star cutset is either a double star cutset or a cut vertex. If $U=(N(u)\cap S)\setminus \{v\}$ and $V=(N(v)\cap S)\setminus \{u\}$, then this double star cutset is denoted by $(u,v,U,V)$.
Note that if $G$ is a 4-hole-free bipartite graph, $U\cup V$ induce a stable set and $U\cap V=\emptyset$.

Let $C_i$, for $i=1, 2$, be a partition of the vertex set
$V(G\setminus S)$, such that there are no edges between vertices of
$C_1$ and $C_2$. Then $G_i=G[S\cup V(C_i)]$, $i=1,2$, are {\em blocks
  of decomposition} with respect to this double star cutset.

A double star cutset of a 2-connected graph $G$ with blocks of
decompositions $G_1$ and $G_2$ is a {\em minimally-sided double star
  cutset} if for some $i\in \{ 1,2 \}$ the following holds: for every
double star cutset of $G$ with blocks of decompositions $G_1'$ and
$G_2'$ neither $V(G_1')\subsetneq V(G_i)$ nor $V(G_2')\subsetneq
V(G_i)$. In this case $G_i$ is a {\em minimal side} of this
minimally-sided double star cutset.

\begin{lemma}\label{extremeStar}
Let $G$ be a 2-connected 4-hole-free bipartite graph that has a star cutset.
Let $G_i$, for some $i\in\{1,2\}$ be a minimal side of a minimally-sided double star cutset of $G$. Then $G_i$ does not have a star cutset.
\end{lemma}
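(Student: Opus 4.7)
The argument is by contradiction. Assume $G_1$ is the minimal block of a minimally-sided double star cutset $(u,v,U,V)$ of $G$ and that $G_1$ nevertheless admits a star cutset $T=\{x\}\cup R$ with $R\subseteq N(x)$. Write $S=\{u,v\}\cup U\cup V$, so $V(G_1)=S\cup V(C_1)$. The plan is to exhibit a double star cutset of $G$ with a block whose vertex set is a proper subset of $V(G_1)$, contradicting minimality.

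The guiding observation is that it is enough to find a component $D$ of $G_1\setminus T$ disjoint from $S$. Indeed, then $D\subseteq V(C_1)$, and since there are no edges between $V(C_1)$ and $V(C_2)$, every neighbour of $D$ in $G$ lies in $T$. Hence $T$ is a cutset of $G$ separating $D$ from the rest. Because $G$ is 2-connected, $R$ cannot be empty (otherwise $x$ would be a cut vertex of $G$), so $T$ is in fact a double star cutset of $G$ centred at $x$ together with any $r\in R$. The corresponding block $G[T\cup D]$ has vertex set contained in $V(G_1)$, and this inclusion is strict because a second component of $G_1\setminus T$ contributes at least one vertex outside $T\cup D$. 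This contradicts the minimality of $G_1$.

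So the remaining task is to produce such a component $D$. When $\{u,v\}\cap T=\emptyset$, the argument is immediate: $u$ and $v$ are adjacent and both lie in $V(G_1)\setminus T$, hence in the same component $D_1$ of $G_1\setminus T$; and each $s\in(U\cup V)\setminus T$ is adjacent to $u$ or $v$, so also lies in $D_1$. Any other component of $G_1\setminus T$ is thus disjoint from $S$, as required.

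When $\{u,v\}\cap T\ne\emptyset$, say $u\in T$, the argument requires a careful subdivision according to whether $v\in T$ and whether $x$ belongs to $\{u,v\}$, $U$, $V$ or $V(C_1)$. In each subcase I exploit 4-hole-freeness, in particular the facts that $U\cup V$ is a stable set and that two vertices on the same side of the bipartition share at most one common neighbour, to control which vertices of $S\setminus T$ can sit in a given component of $G_1\setminus T$. This either produces a component of $G_1\setminus T$ disjoint from $S$, or allows me to augment $T$ by a 4-hole-free-admissible fragment of $S$ into a new double star cutset of $G$ with a block strictly inside $G_1$. The main obstacle is precisely this last case analysis: when $T$ meets $\{u,v\}$, vertices of $U\setminus T$ may in principle fragment into several components of $G_1\setminus T$, each meeting $S$, and only the 4-hole-free hypothesis prevents this fragmentation from obstructing the construction.
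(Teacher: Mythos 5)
Your high-level strategy is the right one and matches the paper's: assume $G_1$ has a star cutset $(x,R)$ and contradict minimality by producing a double star cutset of $G$ with a block properly contained in $G_1$. Your first case, $\{u,v\}\cap(\{x\}\cup R)=\emptyset$, is handled correctly and coincides with the paper's easy subcase (there the new double star cutset is $(x,y,R\setminus\{y\},\emptyset)$ for any $y\in R$). The problem is that the second case, $\{u,v\}\cap(\{x\}\cup R)\neq\emptyset$, is not proved: you describe a case analysis, call it ``the main obstacle'', and stop. That case is the entire content of the lemma, so as it stands there is a genuine gap.

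Concretely, two ingredients are missing. First, your guiding reduction (``find a component of $G_1\setminus T$ disjoint from $S$'') is not always achievable, and the paper does not rely on it in the hard cases; instead it \emph{enlarges} the cutset by a neighborhood of $u$ or $v$, e.g.\ to $(x,u,R\setminus\{u\},U)$ when $u\in R$, to $(u,v,(U\cup R)\setminus\{v\},V)$ when $x=u$, and to $(x,u,R\setminus\{u\},U\setminus\{x\})$ when $x\in U$. For these enlarged sets to still cut off a \emph{nonempty} piece strictly inside $G_1$, one needs the preliminary observation that every vertex of $U\cup V$ has a neighbor in $G_1\setminus S$ --- a fact which itself follows from the minimality of $G_1$ and which your proposal never records. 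Second, the subcase in which some component $C$ of $G_1\setminus(\{x\}\cup R)$ satisfies $V(C)\subseteq U$ cannot be dispatched by exhibiting a smaller cutset at all: there the paper derives a contradiction by showing that such a component would force a vertex of $U$ to have degree at most $1$ in $G_1$ (using that $U$ is stable and that a vertex cannot be adjacent to both $x$ and two members of $R$ in a $4$-hole-free bipartite graph), contradicting the $2$-connectivity of $G_1$. Without these two steps the ``augment $T$ by a $4$-hole-free-admissible fragment of $S$'' sentence does not constitute a proof.
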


\begin{proof}
Let $(u,v,U,V)$ be a minimally-sided double star cutset,
let $G_1$ be its minimal side, and let
$S=\{u ,v\} \cup U\cup V$.
Observe that every vertex of $U\cup V$ has a neighbor in $G_1\setminus S$.
In particular, $G_1$ is 2-connected.
Let us assume by way of contradiction that $(x,R)$ is a star cutset of $G_1$. Since $G_1$ is 2-connected, $R\neq\emptyset$.

\medskip

\noindent{\bf Case 1:} $x\not\in S$.
\medskip

\noindent  Since $G$ is 4-hole-free and bipartite, $x$ has at most one neighbor in $S$. If $R \cap \{u,v\}=\emptyset$, then  vertices of $S\setminus R$ are in the same connected component
of $G_1\setminus (\{ x \} \cup R)$,
and therefore $(x,y,R\setminus\{y\},\emptyset)$, for a vertex $y\in R$, is a double star cutset of $G$ that contradicts the minimality of $G_1$. So w.l.o.g.\  $u\in R$.
Let $C$ be a connected component of $G_1\setminus (\{ x \} \cup R)$ that does not contain a node
of $\{ v \} \cup V$. If $V(C)\setminus U\neq \emptyset$, then $(x,u,R\setminus \{ u \},U)$ is a double star cutset of $G$ that contradicts the minimality of $G_1$. So $V(C)\setminus U=\emptyset$.
But then some vertex $u' \in U$ is of degree~1 in $G_1$ (since $G_1$ is 4-hole-free and bipartite),
contradicting the fact that $G_1$ is 2-connected.

\medskip

\noindent{\bf Case 2:} $x\in S$.
\medskip

\noindent First, let us assume that $x\in\{u,v\}$, say $x=u$. Since $G$ is 4-hole-free and bipartite, every connected component of $G_1\setminus(\{x\}\cup R)$ that contains a vertex from $U$ or a vertex from $V$ contains a vertex from $G_1\setminus S$. Therefore, $(x,v,(U\cup R)\setminus\{v\},V)$ is a double star cutset of $G$ that contradicts the minimality of $G_1$.
So, $x\in U\cup V$, and w.l.o.g.\ we may assume that $x\in U$.
Then the nodes of $\{ v \} \cup V$ are all contained in the same connected component of $G_1 \setminus (\{ x \} \cup R)$.
Again, since $G$ is 4-hole-free and bipartite, every connected component of $G_1\setminus(\{x\}\cup R)$ that contains a vertex from $U$ contains a vertex from $G_1\setminus S$. Therefore, $(x,u,R\setminus\{u\}, U\setminus\{x\})$ is a double star cutset of $G$ that contradicts the minimality of $G_1$.
\end{proof}

Our main result about linear balanceable graphs is the following.

\begin{theorem}\label{deg2-square}
If $G$ is a linear balanceable graph on at least two vertices, then  $G$ contains at least two vertices of degree at most 2.
\end{theorem}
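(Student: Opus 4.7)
We proceed by induction on $|V(G)|$, the case $|V(G)|\le 2$ being immediate since both vertices then have degree at most $1$. If $G$ is disconnected, induction on each component suffices, treating isolated vertices as degree-$0$ vertices. If $G$ is connected but not $2$-connected, the block tree of $G$ has at least two leaves: each end block $B$ is a strictly smaller linear balanceable graph, so by induction $B$ contains two vertices of degree at most $2$ in $B$; since $B$ meets the rest of $G$ in a single cut vertex, at least one of these two vertices has the same degree in $G$ as in $B$, and with two end blocks we obtain the required two vertices of $G$.

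Suppose now that $G$ is $2$-connected. By Theorem~\ref{dt}, $G$ is basic, or has a $2$-join, a $6$-join, or a star cutset. In the basic case, the bipartition class on which all degrees are at most $2$ has at least two vertices (otherwise a single vertex on that side together with $2$-connectivity would force the other side to consist of pendants, a contradiction). If $G$ has a star cutset, we take a \emph{minimally-sided double star cutset} and let $G_1$ be its minimal-side block; by Lemma~\ref{extremeStar}, $G_1$ has no star cutset. Otherwise, if $G$ has a $\{2,6\}$-join, we take a minimally-sided $\{2,6\}$-join and let $G_1$ be its minimal-side block; by Lemmas~\ref{l:2j}(ii), \ref{6jl1}(v) and~\ref{6jl2}, $G_1$ has no star cutset and no $\{2,6\}$-join, hence $G_1$ is basic by Theorem~\ref{dt}. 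In every case $G_1$ is a strictly smaller linear balanceable graph, so induction (or the basic analysis) supplies two vertices of degree at most $2$ in $G_1$.

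The main obstacle is translating these vertices of $G_1$ back to vertices of $G$ of degree at most $2$. Three kinds of vertex cause trouble: the internal marker-path vertices in a $2$-join block and the marker vertices in a $6$-join block (neither of which are vertices of $G$), and cutset vertices in a star-cutset block (which may have strictly larger degree in $G$ than in $G_1$). The minimally-sided property is what allows us to dispose of them. Lemma~\ref{extreme}(1) yields $|A_1|,|B_1|\ge 2$, so the marker endpoints in a $2$-join block have degree at least $3$ in $G_1$ and only the (boundedly many) internal markers have degree $2$; the low-degree class of the basic innermost block must therefore still contain enough vertices lying in $X_1$. Lemma~\ref{6jl1}(iii) forces $|A_i|=1$ in the $6$-join case, so the marker vertices of $G_1$ already coincide with vertices of $G$ having the same degree. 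The star-cutset case is the most subtle: here one uses that $G_1$ has no star cutset to reapply Theorem~\ref{dt} to $G_1$, and by iterating the extreme-decomposition construction one locates two low-degree vertices lying outside the cutset $S$, whose degrees are then preserved in $G$.
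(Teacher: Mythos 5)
Your skeleton coincides with the paper's: induction, reduction to the $2$-connected case, a minimally-sided double star cutset handled via Lemma~\ref{extremeStar}, and a minimally-sided $\{2,6\}$-join whose block is shown to be basic via Lemmas~\ref{l:2j}, \ref{6jl1}, \ref{6jl2} and Theorem~\ref{dt}. The parts you work out (base case, cut vertices, the case where $G$ itself is basic) are fine. But the step you defer to the last paragraph --- translating two low-degree vertices of the innermost basic block back to vertices of degree at most $2$ in $G$ --- is where essentially all of the work lies, and your sketch of it is not an argument: it contains a false claim and omits the points that make the translation possible. Concretely: (1) in the $6$-join case the marker nodes $a_2,a_4,a_6$ of $G_1$ are vertices of $G$, but they have degree $2$ in $G_1$ and degree at least $3$ in $G$ by Lemma~\ref{6jl1}~(iii), so they do \emph{not} ``coincide with vertices of $G$ having the same degree''; one must instead locate degree-$2$ vertices among the neighbours of $a_3$ and $a_5$ inside $X_1$, and use $4$-hole-freeness (no common neighbours of $a_1,a_3,a_5$) to see that these are two distinct vertices avoiding the star cutset. (2) In the $2$-join case a vertex of $A_1$ that has degree $2$ in $G_1$ has degree $1+|A_2|$ in $G$, so ``lying in $X_1$'' is not enough; one needs $|A_2|=|B_2|=1$, which follows from $|A_1|,|B_1|\geq 2$ (Lemma~\ref{extreme}) together with $4$-hole-freeness, and one also needs to choose the $\{2,6\}$-join so that $|X_1\cap\{u,v\}|\leq 1$ in order to argue that, say, $B_1$ avoids the double star cutset $S$.

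The most serious gap is (3), the star-cutset case itself. You propose to obtain the two low-degree vertices of the block $G'=G_1$ by induction, but induction gives no control over \emph{where} in $G'$ those vertices lie. Every vertex of $U\cup V$ has degree at least $2$ in $G'$ and may have degree exactly $2$ there while having degree $3$ or more in $G$, and $|U\cup V|$ is unbounded; so the two vertices produced by the induction hypothesis may both lie in $S$ and be useless. The paper does not apply induction to $G'$ at all: it applies Theorem~\ref{dt} to $G'$ and, when $G'$ is basic, runs a dedicated case analysis exploiting that $u$ and $v$ are adjacent (so $\{v\}\cup U$ and $\{u\}\cup V$ lie on opposite sides of the bipartition, and if $v$ lies on the degree-$2$ side then $|V|\leq 1$), then chases neighbours of the vertex of $V$, or of $v$, through $G'\setminus S$ using bipartiteness and $4$-hole-freeness to produce two degree-$2$ vertices outside $S$. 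When $G'$ instead has a $\{2,6\}$-join, the analysis of (1)--(2) must be carried out \emph{inside} $G'$ while simultaneously tracking $S$. ``Iterating the extreme-decomposition construction'' does not substitute for these arguments, so as it stands the proposal does not prove the theorem.
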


\begin{proof}
We prove the theorem by induction on  $|V(G)|$. If $|V(G)|=2$, then the theorem trivially holds. So, let $G$ be a linear balanceable graph such that $|V(G)|>2$.
We may assume that $G$ is connected, else we are done by induction.

Let $u$ be a cut vertex of $G$, and let $\{C_1,C_2\}$ be a partition of $V(G)\setminus\{u\}$, such that there are no edges between vertices of $C_1$ and $C_2$. Then, by induction
applied to graphs $G[C_i \cup \{ u \} ]$ for $i=1,2$, there is a vertex  $c_i\in C_i\setminus \{u\}$, for $i=1,2$, that is of degree at most 2 in $G[C_i\cup \{ u \} ]$. But then $c_1$ and $c_2$ are also of degree at most 2 in $G$.
So, we may assume that $G$ is 2-connected.

Now suppose that $G$ admits a star cutset. By Lemma~\ref{extremeStar},
there is a double star cutset $(u,v,U,V)$  of $G$, such that a block
of decomposition w.r.t.\ this cutset, say $G'$, has no star cutset. Let $S=\{u ,v\} \cup U\cup V$ and note that all vertices from $U$ and $V$ have a neighbor in $G' \sm S$. By Theorem~\ref{dt} $G'$ is basic or has a $\{ 2,6 \}$-join.

\medskip

\noindent {\bf Case 1:} $G'$ is basic.

\noindent  Let $(X,Y)$ be a bipartition of $G'$ such that all vertices of $Y$ are of degree~2.
Vertices $u$ and $v$ are adjacent, so we may assume w.l.o.g.\ that $\{v\} \cup U \subseteq Y$ and $\{ u \} \cup V \subseteq X$. In particular,  $|V|\leq 1$.

Suppose $V = \{v'\}$.  All the neighbors of $v'$ in $G' \sm S$ are of
degree~2 in $G'$ and in $G$, so we may assume that $v'$ has a unique
neighbor $w$ in $G' \sm S$. Let $w'$ be the unique neighbor of $w$ in
$G' \setminus v'$.  Since $G'$ is 4-hole-free and bipartite, $w'\in V(G')\sm
S$. If $w'$ is of degree~2 in $G'$ (and hence in $G$), then $w'$ and
$w$ are the desired two vertices. So we may assume that $w'$ has at
least three neighbors in $G'$. But then, since $G'$ is 4-hole-free
and bipartite, $w'$ must have a neighbor $w''\in V(G')\sm (S \cup \{ w
\} )$, and hence $w$ and $w''$ are the desired two vertices.

Now suppose that $V=\emptyset$ and let $v'$ be the neighbor of $v$ in
$V(G')\setminus S$. Since $G$ is 4-hole-free and bipartite, $v'$ has
no neighbors in $U\cup \{ u \}$.  So, either $\deg_{G'}(v')\geq 3$, in
which case $v'$ has at least two neighbors in $V(G')\setminus S$ of
degree~2 in $G'$, and hence in $G$, or $\deg_{G'}(v')=2$, in which
case $v'$ and the neighbor of $v'$ in $V(G')\setminus S$ are both of
degree~2 in $G'$, and hence in $G$.  Therefore $G$ has at least two
vertices of degree~2.

\medskip
\noindent{\bf Case 2:} $G'$ has a $\{ 2,6 \}$-join.

\noindent
Let $(X_1',X_2')$ be a $\{ 2,6\}$-join of $G'$. W.l.o.g.\ we may assume that $|X_1'\cap \{ u,v\} |\leq 1$.
Let $(X_1,X_2)$ be a minimally-sided $\{ 2,6\}$-join of $G'$ such that $X_1\subseteq X_1'$, and
let $G_1$ be the corresponding block of decomposition. Clearly $G_1$ is 4-hole-free and
$|X_1 \cap \{ u,v \} |\leq 1$. By Lemmas~\ref{l:2j} and~\ref{6jl1}, $G_1$ is linear balanceable and has
no star cutset. By Lemma~\ref{6jl2}, $G_1$ has no $\{ 2,6\}$-join, and hence by Theorem~\ref{dt},
$G_1$ is basic. We now consider the following two cases.

\medskip
\noindent{\bf Case 2.1:} $(X_1,X_2)$ is a 6-join of $G'$.

\noindent
Let $(X_1,X_2,A_1, \dots ,A_6)$ be the split of this 6-join. By Lemma~\ref{6jl1}, $A_1=\{ a_1\}$,
$A_3=\{ a_3 \}$, $A_5=\{ a_5 \}$, and all these nodes are of degree at least 3 in $G_1$.
Since $G_1$ is 4-hole-free, nodes $a_1,a_3,a_5$ do not have common neighbors in $X_1$.
Since $|X_1 \cap \{ u,v \} |\leq 1$, we may assume w.l.o.g.\ that $(X_1 \sm \{ a_1 \} ) \cap \{ u,v \} =\emptyset$. Let $a_3'$ (resp.\ $a_5'$) be a neighbor of $a_3$ (resp.\ $a_5$)  in $X_1$.
Then $a_3'\neq a_5'$ and $\{ a_3',a_5' \} \cap S=\emptyset$. Since $G_1$ is basic, $a_3'$ and $a_5'$ are of degree~2 in $G_1$, and hence in $G'$. Since $\{ a_3',a_5'\} \cap S=\emptyset$, they are also of degree~2 in $G$.

\medskip
\noindent{\bf Case 2.2:} $(X_1,X_2)$ is a 2-join of $G'$.

\noindent
Let $(X_1,X_2,A_1,A_2,B_1,B_2)$ be the split of this 2-join, and let $P_2$ be the marker path of
$G_1$. By Lemma~\ref{extreme}, $|A_1|\geq 2$, $|B_1|\geq 2$ and the ends of $P_2$ are
of degree at least 3 in $G_1$. Since $G_1$ is basic, it follows that the nodes of $A_1 \cup B_1$ are
all of degree~2 in $G_1$, and on the same side of bipartition of $G_1$, and hence of $G'$ as well.
In particular, it is not possible that both $u$ and $v$ are in $A_2 \cup B_2$.
Since $G'$ is 4-hole-free and bipartite, it follows that $|A_2|=|B_2|=1$, and hence the nodes of
$A_1\cup B_1$ are of degree~2 in $G'$. Since $|X_1\cap \{ u,v \} |\leq 1$, w.l.o.g.\ $B_1 \cap S=\emptyset$, and hence the nodes of $B_1$ are also of degree~2 in $G$.

\medskip

So, we may assume that $G$ does not admit a star cutset. Thus, by Theorem~\ref{dt} $G$ is basic or has a $\{ 2,6\}$-join.
So the theorem holds by the same proof as in Cases 1 and 2 above.
\end{proof}

\begin{corollary}\label{cr-square}
  Let $G$ be a linear balanceable graph that has at least one
  edge. Then there is an edge of $G$ that is not the unique chord of a
  cycle.
\end{corollary}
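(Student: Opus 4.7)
The plan is to reduce Corollary~\ref{cr-square} to Theorem~\ref{deg2-square} via the observation that every edge incident to a vertex of degree at most $2$ is not a chord of any cycle, and hence \emph{a fortiori} not the unique chord of a cycle.

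First I would pass to a connected component $H$ of $G$ containing at least one edge, which exists by hypothesis. Since being $4$-hole-free and being balanceable are both preserved under taking induced subgraphs, $H$ is itself linear balanceable, and being connected with at least one edge, $H$ has at least two vertices. Theorem~\ref{deg2-square} then yields a vertex $v\in V(H)$ with $\deg_H(v)\leq 2$; connectedness of $H$ rules out $\deg_H(v)=0$, so $v$ is incident to an edge $vu$ in $H$, and since $H$ is a component of $G$ we also have $\deg_G(v)=\deg_H(v)\leq 2$.

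Next I would verify the degree-based observation. Suppose for contradiction that $vu$ were a chord of some cycle $C$ in $G$. Then $v\in V(C)$, and its two neighbors along $C$ are distinct vertices in $N_G(v)$. Since $|N_G(v)|\leq 2$, these two cycle-neighbors exhaust $N_G(v)$, so in particular $u$ is one of them, forcing $vu$ to be an edge of $C$ rather than a chord --- a contradiction.

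There is essentially no obstacle: all of the content lives in Theorem~\ref{deg2-square}. The only small point to watch is that the theorem produces low-degree vertices without guaranteeing they are incident to an edge (linear balanceable graphs can have isolated vertices), which is precisely why the reduction to a component containing an edge is carried out at the outset.
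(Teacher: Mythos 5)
Your proposal is correct and follows essentially the same route as the paper, which also deduces the corollary immediately from Theorem~\ref{deg2-square} via the observation that an edge incident to a vertex of degree at most~2 cannot be (the unique) chord of a cycle. Your extra care in passing to a component containing an edge (to rule out the low-degree vertex being isolated) is a reasonable tightening of a detail the paper leaves implicit.
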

\begin{proof}
Follows immediately from Theorem~\ref{deg2-square} since an edge incident to a degree~2 vertex cannot be the unique chord of a cycle.
\end{proof}

\section{Subcubic balanceable graphs}\label{cubic}

A {\em branch vertex} is a vertex of degree at least 3. A {\em branch}
is a path connecting two branch vertices and containing no other
branch vertices. Two branches are {\em non incident} if the sets of
ends of the corresponding paths are disjoint. Note that a 2-connected
graph that is not a cycle is edgewise partitioned into its branches.
A pair of vertices $(u,v)$ of $G$ is a {\em pair of twins} in $G$ if
$N(u)=N(v)$ and $|N(u)|\geq 3$. Note that a cubic bipartite graph has
a pair of twins if and only if it contains a $K_{2,3}$ as a
subgraph. Note that $R_{10}$ does not have a pair of twins.

Our main result on subcubic balanceable graphs is the following theorem.

\begin{theorem}\label{main}
  Let $G$ be a 2-connected balanceable bipartite graph with
  $\Delta(G)\leq 3$. If $G$ is not equal to $R_{10}$ and has at least
  three branch vertices, then one of the following holds:
\begin{enumerate}[(i)]
\item $G$ has two vertices of degree~2 that are in non incident branches.
\item $G$ has a pair of twins and a vertex of degree~2.
\item $G$ has two disjoint pairs of twins.
\end{enumerate}
\end{theorem}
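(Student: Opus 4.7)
The proof plan is strong induction on $|V(G)|$, applying the decomposition theorem (Theorem~\ref{dt}): since $G\neq R_{10}$, the graph is basic or admits a 2-join, a 6-join, or a star cutset.

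In the basic case, one side $Y$ of the bipartition has $\Delta\le 2$ and, by 2-connectedness, every $Y$-vertex has degree exactly $2$; consequently every branch vertex lies in the other side $X$. Hence $G$ is a topological subdivision of a cubic multigraph $H$ on the branch vertex set. Every branch runs between two $X$-vertices, so it has even length $\ge 2$ and contains an interior $Y$-vertex of degree $2$ in $G$. Parity forces $|V(H)|$ even, so the hypothesis $|V(H)|\ge 3$ yields $|V(H)|\ge 4$, and a short multigraph argument shows that every cubic multigraph on $\ge 4$ vertices contains two vertex-disjoint edges (if not, all edges share a common vertex, which is easily ruled out when $|V(H)|\ge 4$). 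These disjoint edges lift to two non-incident branches of $G$, each with a degree-$2$ vertex, yielding conclusion~(i).

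For the three cutset cases, the strategy is to pass to a block of decomposition via a minimally-sided cutset---Lemma~\ref{extreme} for a 2-join, Lemma~\ref{6jl2} for a $\{2,6\}$-join. Lemmas~\ref{l:2j}, \ref{6jl1}, \ref{extreme}, and~\ref{6jl2} ensure that the minimal block $G_1$ is balanceable, has no star cutset, and admits no further $\{2,6\}$-join; moreover Lemma~\ref{6jl1}(iii) gives $|A_i|=1$ in the subcubic 6-join setting, making the marker nodes genuine vertices of $G$, while 2-join minimality combined with $\Delta(G)\le 3$ forces $|A_1|=|B_1|=2$ and keeps $G_1$ subcubic. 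After verifying that $G_1$ is 2-connected, has at least three branch vertices, and is not $R_{10}$ (with small or degenerate blocks handled directly from the explicit cutset structure, using the ``other side'' $G_2$ to produce the required configuration), I apply the induction hypothesis to $G_1$ to obtain one of (i)--(iii) and lift: a degree-$2$ vertex or pair of twins lying in $G_1$ away from the marker or star vertices keeps its degree and neighborhood in $G$. The star cutset case is analogous: subcubicness gives $|\{x\}\cup N(x)|\le 4$, and since blocks are induced subgraphs, vertices away from this small cutset keep their neighborhoods. Outcomes (ii) and (iii) arise when a block collapses to a configuration forcing twins near the cutset boundary.

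The main obstacle is lifting the inductive output back to $G$. A pair of twins in $G_1$ may include a marker vertex and hence fail to be a pair of twins of $G$; two degree-$2$ vertices in non-incident branches of $G_1$ may become incident in $G$ when the marker path is replaced by the real structure and branches merge across the cutset. The remedy is to combine the minimality of the cutset side (bounding the boundary) with the special-set properties from Lemmas~\ref{l:2j}, \ref{6jl1}, and~\ref{extreme}, which force boundary vertices to have high degree---so these vertices cannot themselves be the degree-$2$ or twin vertices of the conclusion, and the induction's output, found in the interior of the block, transfers to $G$. Degenerate cases where induction does not apply directly (too few branch vertices in $G_1$, or $G_1=R_{10}$) will require a direct verification using the explicit structure of the decomposition.
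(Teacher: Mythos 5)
Your overall architecture (induction on $|V(G)|$ plus Theorem~\ref{dt}, with the basic case done directly and extreme decompositions for the cutsets) matches the paper's, and your basic-case argument is essentially the paper's Case~1 of Lemma~\ref{l2}. However, there are genuine gaps in the cutset cases. First, for the 2-join you propose to ``apply the induction hypothesis to $G_1$,'' but the block of decomposition of a 2-join is not an induced subgraph of $G$: it is $G[X_1]$ plus a marker path on $5$ or $6$ vertices, while Lemma~\ref{l1} only guarantees $|X_2|\geq 4$, so $|V(G_1)|$ need not be smaller than $|V(G)|$ and the induction does not apply. The paper avoids this entirely: in the absence of a star cutset and a 6-join it takes a minimally-sided 2-join, shows via Lemmas~\ref{extreme} and~\ref{l:2j} that the minimal block is \emph{basic}, extracts a degree-2 vertex from $X_1\setminus(A_1\cup B_1)$, repeats from the other side, and checks non-incidence directly (Lemma~\ref{l2}, Case~2) --- no induction on a 2-join block anywhere. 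Second, you invoke Lemma~\ref{6jl2} and implicitly Lemma~\ref{6jl1}(v), but both are stated and proved only for 4-hole-free graphs; a subcubic balanceable graph can contain 4-holes, and the paper explicitly remarks that property (v) fails in general and that a different technique is used in the subcubic case. The paper's actual 6-join treatment first disposes of star cutsets (so $G$ has no pair of twins), notes the two 6-join blocks \emph{are} induced subgraphs with at least three branch vertices, and applies induction to both to get degree-2 vertices outside $\cup A_i$, landing in non-incident branches.

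Third, and most substantially, the star cutset case is where almost all of the paper's work lies, and your proposal defers it to ``direct verification.'' The paper chooses $(x,R)$ with $|R|$ minimum, deduces that every vertex of $R$ has degree~3 with neighbors in both of exactly two components (so $|R|\leq 3$ by subcubicness), and then runs a delicate case analysis on $|R|\in\{1,2,3\}$: it needs an auxiliary claim about blocks with few branch vertices, an odd-wheel contradiction via Theorem~\ref{balanceable-truemper} when $|R|=2$, and the identification of $K_{3,3}$ and of specific twin pairs near the cutset when $|R|=3$ --- this is precisely where outcomes (ii) and (iii) are generated and where one verifies that degree-2 vertices found in the two blocks lie in non-incident branches of $G$ (branches of the blocks can merge through $R$). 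Your remark that ``vertices away from this small cutset keep their neighborhoods'' does not address any of this. So while the skeleton is right, the proposal as written would not compile into a proof without supplying the content of Lemma~\ref{l2} and the full star-cutset analysis, and it would need to drop the two incorrect lemma invocations.
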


In the previous theorem, if $G$ has at least three branch vertices,
then it has in fact at least four branch vertices (because 2-connected
graphs have no vertex of degree~1).

\vspace{2ex}

The following lemma settles the case in which $G$ does not admit a star
cutset nor a 6-join. We treat this case separately because it does not
need induction.

\begin{lemma}\label{l2}
  Let $G$ be a 2-connected balanceable bipartite graph with $\Delta
  (G)\leq 3$, that is not equal to $R_{10}$ and has at least three
  branch vertices. If $G$ does not have a star cutset nor a 6-join,
  then $G$ has two vertices of degree~2 that are in non incident
  branches.
\end{lemma}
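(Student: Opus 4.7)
The plan is to apply Theorem~\ref{dt} to $G$. Under the hypotheses, $G$ is balanceable and subcubic, is not $R_{10}$, and has neither a star cutset nor a 6-join, so Theorem~\ref{dt} leaves two cases: $G$ is basic, or $G$ has a 2-join.

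\textbf{Case 1: $G$ is basic.} Let $(X,Y)$ be a bipartition of $G$ with all $Y$-vertices of degree at most $2$; by 2-connectivity these vertices have degree exactly~$2$. Every branch vertex has degree~$3$ and hence lies in $X$, so by bipartiteness every branch has both endpoints in $X$, thus even length $\geq 2$, and contains a degree-$2$ interior vertex. The number of branch vertices is even (degree sum parity) and by hypothesis $\geq 3$, hence $\geq 4$; contracting each branch to an edge yields a $3$-regular multigraph on $\geq 4$ vertices, thus with $\geq 6$ edges, in which any edge is incident to at most $2+2=4$ other edges. So two non-incident branches of $G$ exist, each carrying a degree-$2$ interior vertex.

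\textbf{Case 2: $G$ has a 2-join.} Pick a minimally-sided 2-join with split $(X_1,X_2,A_1,A_2,B_1,B_2)$ and minimal side $X_1$, and let $G_1$ be the corresponding block of decomposition. Lemma~\ref{l:2j} gives that $G_1$ is balanceable with neither star cutset nor 6-join, and Lemma~\ref{extreme} gives $|A_1|,|B_1|\geq 2$, the fact that every vertex of $A_2\cup B_2$ has degree $\geq 3$ in $G$, and the absence of a 2-join in $G_1$. Since $G$ is subcubic, Lemma~\ref{l1}(ii) applied to any $A_2$-vertex (which would otherwise have all three of its $G$-neighbors in $A_1$ and none in $X_2$) forces $|A_1|\leq 2$, so $|A_1|=2$ and symmetrically $|B_1|=2$. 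Consequently $\deg_{G_1}(a_2)=|A_1|+1=3$ and $\deg_{G_1}(b_2)=3$, so $a_2$ and $b_2$ are branch vertices of $G_1$, and the marker path $M_2$ is a branch of $G_1$ with degree-$2$ interior. Theorem~\ref{dt} applied to $G_1$ then forces it to be basic or equal to $R_{10}$, and the degree-$2$ interior of $M_2$ rules out the $3$-regular $R_{10}$, so $G_1$ is basic. Reapplying the Case~1 argument to $G_1$ yields two non-incident branches of $G_1$, each with a degree-$2$ interior vertex.

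The concluding step is to lift these two non-incident branches of $G_1$ to two non-incident branches of $G$: a branch of $G_1$ lying entirely in $X_1\setminus(A_1\cup B_1)$ lifts unchanged (since degrees of $G$ and $G_1$ coincide there); a branch with an endpoint in $\{a_2,b_2\}$ lifts to the branch of $G$ obtained by replacing the end-segment through $A_1$ (resp.\ $B_1$) by its continuation in $G$ into an $A_2$-vertex (resp.\ $B_2$-vertex), which is a branch vertex of $G$ by Lemma~\ref{extreme}. The main obstacle is the delicate subcase in which one of the two non-incident branches of $G_1$ coincides with the marker path $M_2$ itself; $M_2$ has no direct analogue as a single branch of $G$, and here one must instead exploit the structure of $G[X_2]$ (noting $|A_2|,|B_2|\leq 2$ and that the $A_2,B_2$-vertices have degree~$3$) to produce a suitable branch of $G$ non-incident with the first lift.
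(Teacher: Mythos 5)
Your Case 1 is correct and essentially the paper's argument (the paper counts branches at three chosen branch vertices rather than via the contracted cubic multigraph, but both are routine). Your Case 2 also begins exactly as the paper does: minimally-sided 2-join, block $G_1$ shown to be balanceable, star-cutset-free, 6-join-free, 2-join-free, hence basic, with $|A_1|=|B_1|=2$. The divergence, and the gap, is in your concluding step. You propose to find \emph{both} degree-2 vertices inside $G_1$ and then lift two non-incident branches of $G_1$ back to $G$. This does not go through as stated. First, the subcase you flag but do not resolve is genuinely problematic: the two non-incident branches of $G_1$ may include the marker path $M_2$, whose degree-2 interior vertices do not exist in $G$, and nothing in your argument produces a replacement branch on the $X_2$ side. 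Second, applying your Case 1 to $G_1$ presupposes that $G_1$ has at least four branch vertices, but $G_1$ could a priori have only the two branch vertices $a_2$ and $b_2$ (e.g.\ a theta-like graph), in which case all three of its branches are pairwise incident and Case 1 yields nothing. Third, even the ``easy'' lift is not degree-preserving: when $|A_2|=2$, a vertex of $A_1$ has degree 2 in $G_1$ but degree 3 in $G$, so a degree-2 interior vertex of a branch of $G_1$ lying in $A_1\cup B_1$ need not be a degree-2 vertex of $G$.

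The paper avoids all of this by never lifting branches. It proves a Claim that $X_1\setminus(A_1\cup B_1)$ contains a degree-2 vertex $c_1$ (a vertex strictly inside the minimal side, where degrees in $G_1$ and $G$ agree), and then repeats the whole minimally-sided-2-join argument symmetrically with a minimal side $X_2'\subseteq X_2$ to obtain a second degree-2 vertex $c_2\in X_2'\setminus(A_2'\cup B_2')$. Non-incidence of the two branches of $G$ containing $c_1$ and $c_2$ is then argued directly in $G$: since $|A_1|=|B_1|=|A_2'|=|B_2'|=2$, every vertex of $A_2\cup B_2$ and of $A_1'\cup B_1'$ has degree 3, so no single branch of $G$ can stretch from $X_1\setminus(A_1\cup B_1)$ across to $A_2'\cup B_2'$ or vice versa. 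To repair your proof you would need to carry out this second, symmetric application of the extreme-2-join machinery on the $X_2$ side (or something equivalent); the structure of $G[X_2]$ that you gesture at is exactly what that second application supplies.
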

\begin{proof}
  By Theorem~\ref{dt}, $G$ is either basic or has a 2-join, so we
  consider the following two cases.  Note that every vertex of $G$ is
  of degree at least 2.  \medskip

\noindent{\bf Case 1:} $G$ is basic.
\\
Since $G$ is basic, no two branch vertices are adjacent, and hence every branch of $G$ contains
a vertex of degree~2.
Let $a,b,c$ be distinct vertices of degree~3, such that there is a branch from $a$ to $b$. There are three branches in $G$ with end  $c$. If one of the other ends of these branches is not $a$ or $b$, the proof is complete. So we may assume w.l.o.g.\  that we have two branches between $a$ and $c$ and one branch between $b$ and $c$. But then there is a branch from $b$ with an end not in $\{a,c\}$,
and hence the result follows.
\medskip

\noindent{\bf Case 2:} $G$ has a 2-join.
\\
Let $(X_1,X_2,A_1,A_2,B_1,B_2)$ be a split of a minimally-sided 2-join
of $G$ with $X_1$ being a minimal side. Let $G_1$ be the corresponding
block of decomposition. By Lemma~\ref{l:2j}, $G_1$ is balanceable and
it does not have a star cutset nor a 6-join.  By Lemmas~\ref{extreme}
and~\ref{l1}, $G_1$ has no 2-join, $|A_1|=|B_1|=2$, and all vertices of
$A_2 \cup B_2$ are of degree~3.  So by Theorem~\ref{dt} $G_1$ is
basic.

\medskip

\noindent{\bf Claim:} {\em $X_1 \setminus (A_1 \cup B_1)$ contains a vertex of degree~2.}
\\
\noindent{\em Proof of Claim:}
Assume not. Let $(X,Y)$ be a bipartition of $G_1$ such that all vertices of $X$ are of degree~2. Let $a_2, \ldots ,b_2$ be the marker path of $G_1$, with $a_2$ complete to $A_1$ and
$b_2$ complete to $B_1$. Then $a_2$ and $b_2$ are in $Y$ and hence $A_1 \cup B_1 \subseteq X$.
In particular, there are no edges in $G[A_1 \cup B_1]$.
So by Lemma~\ref{l1}, $X_1 \setminus (A_1 \cup B_1)$ is not empty.
By our assumption $X_1 \setminus (A_1 \cup B_1)\subseteq Y$. So for every $u \in X_1 \setminus (A_1 \cup B_1)$, $N(u) \subseteq A_1 \cup B_1$. But then since $|A_1\cup B_1|=4$, $|N(u)|\leq 3$ and the fact that each vertex of $A_1 \cup B_1$ is of degree~2 in $G_1$, we have a contradiction.
This completes the proof of the claim.

\medskip

By the claim let $c_1\in X_1 \setminus (A_1 \cup B_1)$ be of degree~2
(in $G_1$, and hence in $G$ as well).  Let
$(X_1',X_2',A_1',A_2',B_1',B_2')$ be a split of a minimally-sided
2-join of $G$ with $X_2'$ being a minimal side and $X_2'\subseteq
X_2$.  Then, as before, $|A_2'|=|B_2'|=2$, and hence all the vertices
of $A_1'\cup B_1'$ are of degree~3.  By the claim, there is a vertex
$c_2\in X_2'\setminus (A_2' \cup B_2')$ that is of degree~2 in $G$.

Since $|A_1| = |B_1| = |A'_2| = |B'_2| = 2$, we see that no branch of
$G$ may overlap the three following sets: $A_1 \cup B_1$, $X_1
\setminus (A_1 \cup B_1)$ and $A'_2 \cup B'_2$ (resp.\ $A'_2 \cup
B'_2$, $X'_2 \setminus (A'_2 \cup B'_2)$ and $A_1 \cup B_1$).  It
follows that $c_1$ and $c_2$ are in non incident branches.
\end{proof}

\medskip

\noindent {\em Proof of Theorem~\ref{main}:} We proceed by induction on $|V(G)|$.
If $|V(G)| = 1$, then the theorem is vacuously true.  By
Theorem~\ref{dt} and Lemma~\ref{l2}, we may assume that $G$ has a
star cutset or a 6-join.

\medskip

\medskip

\noindent{\bf Proof when $G$ has a star cutset.}
\\
Let $(x,R)$ be a star cutset of $G$ such that $|R|$ is minimum.  Since
$G$ is 2-connected, $|R|\geq 1$, and by the choice of $(x,R)$ and since
$G$ is subcubic, every vertex of $R$ has neighbors in every connected
component of $G\setminus (\{x\}\cup R)$, every vertex of $R$ is of
degree~3 and $G\setminus (\{x\}\cup R)$ has exactly two connected
components, say $C_1$ and $C_2$. Let $G_i$ be the block of
decomposition w.r.t.\ this cutset that contains $C_i$, for
$i=1,2$. Note that every vertex of $R$ is of degree~2 in $G_i$. Note
also that both $G_1$, $G_2$ are 2-connected.
\medskip

\noindent{\bf Claim:} {\em If $x$ is of degree~2 in $G_i$, for
  some $i\in\{1,2\}$, then $C_i$ contains a vertex $u$ of degree~2, or
  a pair of twins.  Furthermore, if $G_i$ has at least two branch
  vertices, then $u$ can be chosen so that $x$ and $u$ are not in the
  same branch of $G_i$.}
\\
\noindent{\em Proof of Claim:}
If $G_i$ has no branch vertices, then $C_i$ contains a vertex of
degree~2. If $G_i$ has exactly two branch vertices, both are in $C_i$.
Since these vertices can have at most one branch of length 1
connecting them, there must be a branch between them that is fully contained in $C_i$ and is of length at least~2, and
therefore there is a vertex of degree~2 in $C_i$ that is
not in the same branch as $x$. If $G_i$ has at least 3 branch
vertices, then, by the induction hypothesis, $C_i$ contains a vertex
of degree~2 that is not in the same branch as $x$, or $C_i$ contains a
pair of twins. This completes the proof of Claim.  \medskip

We now consider  the following cases.

\medskip

\noindent{\bf Case 1:} $|R|=1$.
\\
\noindent
Note that since $G$ is 2-connected, $x$ has a neighbor in both $C_1$
and $C_2$, and in particular, $x$ is of degree~2 in both $G_1$ and
$G_2$. Since $G$ has at least three branch vertices, at least one of
$G_1$ or $G_2$ has at least two branch vertices, so, by Claim
applied for $i=1$ and $i=2$, $G$ satisfies the theorem.  \medskip

\noindent{\bf Case 2:} $|R| =2$.
\\
\noindent
Let $R=\{y_1,y_2\}$. Suppose that $\deg(x)=2$. Then at least one of
$G_1$ or $G_2$ has at least two branch vertices (since neither can
have exactly one), w.l.o.g.\ say $G_1$ does. By Claim applied to
$G_1$, there is a degree 2 vertex $u$ in $C_1$ that is not in the same
branch of $G_1$ as $x$. Since $y_1$ and $y_2$ have degree~3 in $G$,
$x$ and $u$ are degree 2 vertices of $G$ that are contained in non
incident branches of $G$, a contradiction.  So $\deg(x)=3$, and
w.l.o.g.\ $x$ has a neighbor in $C_1$ and does not in $C_2$.  If $G_1$
has exactly two branch vertices and they are adjacent, then for a
shortest path $P$ from $y_1$ to $y_2$ in $G_2\setminus\{x\}$, the set
$V(G_1)\cup V(P)$ induces an odd wheel with centre $x$, contradicting
Theorem \ref{balanceable-truemper}. So, if $G_1$ has exactly two
branch vertices, then there is a vertex of degree~2 in $G_1$ in a
branch that does not contain $y_1$ nor $y_2$, and therefore, by Claim
applied to $G_2$, $G$ satisfies the theorem, a contradiction.  So
$G_1$ must have at least three branch vertices, and hence by induction
hypothesis, $G_1$ has a pair of twins or a vertex of degree~2 in a
branch that has both of its ends in $C_1$.  But then by Claim applied
to $G_2$, $G$ satisfies the theorem.  \medskip

\noindent{\bf Case 3:} $|R|=3$.
\\
\noindent
Let $R=\{y_1,y_2,y_3\}$.  First, let us suppose that both $G_1$ and
$G_2$ have exactly two branch vertices, and that $v_i$ is a branch
vertex of $G_i$ different from $x$, for $i=1,2$. If $G_i$, for
$i=1,2$, does not have a vertex of degree~2 other than $y_j$, for
$j=1,2,3$, then $G$ is a $K_{3,3}$, and hence it satisfies
(iii) of the theorem. So, we may assume that there is a vertex
of degree~2 (in $G$) in a branch of $G_1$ containing $y_1$. If
$y_2v_2$ or $y_3v_2$ is not an edge, then $G$ satisfies (i) of
the theorem, so we may assume that $y_2v_2$ and $y_3v_2$ are edges. If
$y_1v_2$ is also an edge, then $x$ and $v_2$ form a pair of twins, and
therefore $G$ satisfies~(ii) of the theorem. When $y_1v_2$ is not an
edge, then by symmetry $v_1y_2$ and $v_1y_3$ are edges. But then $y_2$
and $y_3$ form a pair of twins, and therefore $G$ satisfies
(ii) of the theorem.

Observe that if $G_i$ has at least three branch vertices, then, by
induction hypothesis, there is a vertex $u_i$ of degree~2 in a branch of
$G_i$ not having $x$ as its end, or $G_i$ has a pair of twins that
does not contain $x$ (since $G_i$ has at least three branch vertices).
So if both $G_1$ and $G_2$ have at least three branch vertices, then
the theorem holds.  Therefore we may assume that $G_1$ has at least
three and $G_2$ exactly two branch vertices. If $G_2$ has a vertex2
$u_2$ of degree~2 not in $\{y_1,y_2,y_3\}$, then $G$ satisfies~(i)
or~(ii) of the theorem. So we may assume that the only vertices of
$G_2$ of degree~2 are $y_1,y_2$ and $y_3$, and therefore $x$ and the
other branch vertex of $G_2$ form a pair of twins, hence $G$ satisfies
(ii) or~(iii). This completes the proof when $G$ has a star
cutset.  \medskip

\noindent
{\bf Proof when $G$ has a 6-join.}

We may assume that $G$ has no star cutset.  In particular, $G$ does
not contain a pair of twins (for if $u,v$ is a pair of twins of $G$, since $G$
has at least three branch vertices, $V(G)\setminus (N(u)\cup \{ u,v \} )\neq \emptyset$, and hence $N(u)\cup \{ u \}$
is a star cutset).  Let $(X_1,X_2,A_1,A_2,A_3,A_4,A_5,A_6)$
be a split of a 6-join of $G$ and let $A=\cup_{i=1}^{6}A_i$.  By
Lemma~\ref{6jl1}~(iii), $|A_i|=1$ for every $i \in \{ 1, \ldots ,6\}$
and all nodes of $A$ are of degree~3 in $G$.  It follows that both
blocks of decomposition $G_1$ and $G_2$ have at least three branch
vertices. By the choice of $G$, each of them has a vertex of degree~2
not in $A$, and hence $G$ satisfies (i) of the theorem.  This
completes the proof.\hfill$\Box$

\medskip

As a consequence of Theorem~\ref{main} we have the following
corollary, a special case of which was conjectured in~\cite{spiga}.

\begin{corollary}\label{twins}
If $G$ is a cubic balanceable graph that is not $R_{10}$, then $G$ has a pair of twins none of whose neighbors is a cut vertex of $G$.
\end{corollary}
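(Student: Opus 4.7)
The plan rests on one simple observation: a connected cubic bipartite graph cannot have a cut vertex. Indeed, if $G$ had a cut vertex $v$, then any end block $B$ of $G$ containing $v$ would be a $2$-connected subgraph in which $v$ has degree $2$ (the third edge of $v$ lies in another block) and every other vertex has degree $3$ (it lies in $B$ alone). Since $B$ is bipartite with parts $A_B$ and $B_B$, say with $v\in A_B$, equating degree sums on the two sides gives
\[
3|A_B|-1=\sum_{u\in A_B}\deg_B(u)=|E(B)|=\sum_{u\in B_B}\deg_B(u)=3|B_B|,
\]
whence $3(|A_B|-|B_B|)=1$, which has no integer solution; the case $v\in B_B$ is symmetric. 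Hence $G$ (which I may assume connected, passing to a non-$R_{10}$ component otherwise) has no cut vertex, i.e., $G$ is $2$-connected.

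Once $G$ is known to be $2$-connected, I apply Theorem~\ref{main} directly. The graph $G$ is $2$-connected, balanceable, bipartite, not $R_{10}$, with $\Delta(G)\leq 3$, and has at least three branch vertices (the smallest cubic bipartite graph is $K_{3,3}$ on six vertices, all of which are branch vertices). Since $G$ is cubic, it has no vertex of degree $2$, so cases (i) and (ii) of Theorem~\ref{main} are impossible and case (iii) must hold: $G$ contains two disjoint pairs of twins. Because $G$ has no cut vertex at all, either such twin pair trivially satisfies the corollary's conclusion.

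Thus the entire argument reduces to the short parity observation showing that a connected cubic bipartite graph is $2$-connected, followed by an immediate appeal to case (iii) of Theorem~\ref{main}. The only step that does any real work is the parity count in the first paragraph, and I do not foresee further technical obstacles.
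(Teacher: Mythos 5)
Your proof is correct, and it takes a genuinely different route from the paper's own argument. The paper never establishes that $G$ is $2$-connected: it passes to an end block $G'$, which may a priori contain one vertex of degree~$2$ (the cut vertex of $G$ lying in $G'$), repairs this by subdividing twice an edge incident to that vertex, applies Theorem~\ref{main} to the resulting graph $G''$ (outcome~(i) being excluded because $G''$ has at most one branch of length greater than~$1$), and finally checks that the twins obtained have no degree-$2$ neighbours in $G''$, so that they survive as twins of $G$ none of whose neighbours is a cut vertex. Your parity count short-circuits all of this: in a connected cubic bipartite graph an end block $B$ with cut vertex $v$ would force $3\bigl(|A_B|-|B_B|\bigr)=3-\deg_B(v)$ with $\deg_B(v)\in\{1,2\}$, which is impossible, so $G$ is $2$-connected, the cut-vertex clause of the corollary is vacuous, and cubicity eliminates outcomes~(i) and~(ii) of Theorem~\ref{main} outright. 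This is shorter and avoids having to argue that the double subdivision preserves balanceability and $2$-connectedness. Two minor points. First, you fix a cut vertex $v$ and then invoke ``an end block containing $v$''; not every cut vertex lies in an end block, so you should instead take any end block $B$ and let $v$ be the unique cut vertex of $G$ that it contains --- the count is unchanged, and your parenthetical claim that $\deg_B(v)=2$ is then justified because an end block here cannot be $K_2$ (its non-cut vertex would have degree~$1$ in $G$) and hence is $2$-connected. Second, both your proof and the paper's tacitly assume $G$ connected; a disjoint union of copies of $R_{10}$ would falsify the statement itself, so connectivity is an implicit hypothesis rather than a gap on your side.
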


\begin{proof}
Let $G'$ be an end block of $G$.
Then $G'$
has at most one vertex of degree~2, and all the other vertices of degree~3.
If $G'$ does not have a vertex of degree~2, then let $G''=G'$, and otherwise let $G''$ be the graph obtained from $G'$ by subdividing twice an edge incident to the degree~2 vertex. Clearly $G''$ is 2-connected
balanceable and not equal to $R_{10}$. Note that $G''$ has at most one branch of length greater than 1.
By Theorem~\ref{main} $G''$ has a pair of twins $\{ u_1,u_2 \}$.
Note that none of the neighbors of $u_1$ and $u_2$ in $G''$ can be of degree~2 in $G''$, and hence
$\{ u_1,u_2 \}$ is the desired pair of twins of $G$.
\end{proof}

As was noticed in~\cite{spiga} (for the special case of cubic
balanced graphs), Corollary~\ref{twins} implies the following.

\begin{corollary}\label{cr-cubic}
Let $G$ be a cubic balanceable graph. Then the following hold:
\begin{enumerate}[(i)]
\item $G$ has girth four.
\item If $G\neq R_{10}$ then $G$  contains an edge that is not the unique chord of a cycle.
\item $G$ is not planar.
\end{enumerate}
\end{corollary}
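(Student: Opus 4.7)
The plan is to derive parts (i)--(iii) of the corollary from Corollary~\ref{twins}, handling the exceptional case $G=R_{10}$ separately by direct inspection of its structure. When $G\neq R_{10}$, let $\{u_1,u_2\}$ be the pair of twins supplied by Corollary~\ref{twins} and let $\{v_1,v_2,v_3\}$ be their common neighborhood, so that each $v_i$ has a unique further neighbor $w_i$ in $G$, and none of $v_1,v_2,v_3$ is a cut vertex of $G$.

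Parts (i) and (ii) reduce quickly to this setup. Since $G$ is bipartite its girth is at least $4$, and in the twins case the vertices $u_1,v_1,u_2,v_2$ form a $4$-cycle; for $G=R_{10}$ the given adjacencies exhibit $x_1x_2x_7x_6$ as a $4$-cycle, proving (i). For (ii), assume $G\neq R_{10}$ and suppose for contradiction that $u_1v_1$ is the unique chord of a cycle $C$. Since $u_1$'s only neighbors are $v_1,v_2,v_3$ and $u_1v_1\notin E(C)$, the two $C$-edges at $u_1$ must be $u_1v_2$ and $u_1v_3$, placing $v_2,v_3$ on $C$; the symmetric argument at $v_1$ forces $u_2\in V(C)$; but then the two $C$-edges at $u_2$ are $u_2v_1$ together with exactly one of $u_2v_2,u_2v_3$, leaving the other as a second chord of $C$, a contradiction.

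For (iii) the strategy is to exhibit $K_{3,3}$ as a minor and conclude non-planarity by Kuratowski's theorem. If $G=R_{10}$, contracting the four edges $x_1x_2,x_6x_7,x_3x_4,x_8x_9$ (one pair on each ``half'' of the outer 10-cycle) yields $K_{3,3}$, so $R_{10}$ is non-planar. If $G\neq R_{10}$, set $H=G\setminus\{u_1,u_2,v_1,v_2,v_3\}$ and use the branch sets $\{u_1\},\{u_2\},B,\{v_1\},\{v_2\},\{v_3\}$, where $B$ is a single connected component of $H$ containing all three $w_i$'s; once such a $B$ is in hand, the six sets are manifestly disjoint, connected, and pairwise joined by the required edges, so $G$ has $K_{3,3}$ as a minor. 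The main obstacle, and the only nontrivial point, is producing such a $B$. The key structural observation is that $u_1$ and $u_2$ have no neighbor in $H$ and the unique $H$-neighbor of each $v_i$ is $w_i$, so the only edges between $H$ and its complement are the three edges $v_iw_i$. If some $w_i$ lay in a component of $H$ containing no $w_j$ for $j\neq i$, that component would attach to the rest of $G$ solely through $v_i$, making $v_i$ a cut vertex and contradicting Corollary~\ref{twins}; hence $B$ exists, $G$ contains $K_{3,3}$ as a minor, and $G$ is non-planar.
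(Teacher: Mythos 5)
Your proof is correct and follows essentially the same route as the paper: all three parts are derived from Corollary~\ref{twins}, with the $4$-cycle $u_1v_1u_2v_2$ for (i), the degree count at $u_2$ for (ii), and a $K_{3,3}$ obstruction built from $u_1,u_2,v_1,v_2,v_3$ and a connected piece of $G\setminus\{u_1,u_2,v_1,v_2,v_3\}$ meeting all three $v_i$'s for (iii). The only (harmless) cosmetic differences are that you verify the $R_{10}$ case explicitly rather than leaving it as an easy check, and you exhibit a $K_{3,3}$ minor using the whole component as a branch set where the paper extracts a minimal $K_{3,3}$ subdivision.
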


\begin{proof}
  It is easy to see that if $G=R_{10}$ then~(i) and~(iii) hold. So we
  may assume that $G \neq R_{10}$.  By Corollary~\ref{twins}, let
  $\{u_1,u_2\}$ be a pair of twins of $G$, and $\{v_1,v_2,v_3\}$ the
  set of neighbors of $u_1$ and $u_2$.  Then $u_1v_1u_2v_2$ is a cycle
  of length 4, and hence~(i) holds.  Suppose that $u_1v_1$ is a unique
  chord of a cycle $C$ in $G$.  Then all neighbors of $u_1$ and $v_1$
  belong to $C$, and in particular, $u_2$ belongs to $C$ and has three
  neighbors in $C$, a contradiction. Hence~(ii) holds.

  By Corollary~\ref{twins} we may assume that none of $v_1,v_2,v_3$ is
  a cut vertex of $G$. So there is a connected component $C$ of $G
  \setminus \{ u_1,u_2,v_1,v_2,v_3 \}$ such that all of $v_1,v_2,v_3$
  have a neighbor in $C$. Let $C'$ be a minimal induced subgraph of
  $C$ that is connected and all of $v_1,v_2,v_3$ have a neighbor in
  $C'$. Since $G$ is cubic, it is easy to see that $V(C') \cup \{
  u_1,u_2,v_1,v_2,v_3 \}$ induces a subdivision of
  $K_{3,3}$. Therefore, by Kuratowski's Theorem (see for
  example~\cite{bondy.murty:book}), $G$ is not planar.
 \end{proof}

\section*{Acknowledgement}

Thanks to Pablo Spiga for useful discussions.

\end{document}